\newlength\knuthian@fdfive
\def\mathpal@save#1{\let\was@math@style=#1\relax}
\def\utilde#1{\mathpalette\mathpal@save
              {\setbox124=\hbox{$\was@math@style#1$}%
\setbox125=\hbox{$\fam=3\global\knuthian@fdfive=\fontdimen5\font$}
\setbox125=\hbox{$\widetilde{\vrule height 0pt depth 0pt width \wd124}$}%
               \baselineskip=1pt\relax
               \lineskiplimit=\z@\relax
               \lineskip=1pt\relax
               \vtop{\copy124\copy125\vskip -\knuthian@fdfive}}}
\declaretheorem[numberwithin=section]{theorem}
\newtheorem{lemma}[theorem]{Lemma}
\newtheorem{corollary}[theorem]{Corollary}
\newtheorem{question}[theorem]{Question}
\newtheorem*{claim}{Claim}
\newtheorem*{theorem*}{Theorem}
\theoremstyle{definition}
\newtheorem{definition}[theorem]{Definition}
\theoremstyle{remark}
\newtheorem{remark}[theorem]{Remark}
\title{An incompleteness theorem via ordinal analysis}
\author{James Walsh}
\thanks{Thanks to Dan Appel, Reid Dale, and Paolo Mancosu for discussion. Thanks also to an anonymous referee for helpful comments and suggestions. Special thanks to Anton Freund for discussion of Section \ref{avoiding-diag} and to Antonio Montalb\'{a}n for discussion of Section \ref{main-section}.}
\address{Sage School of Philosophy, Cornell University}
\email{jameswalsh@cornell.edu}
\begin{document}

\maketitle

\begin{abstract}
We present an analogue of G\"{o}del's second incompleteness theorem for systems of second-order arithmetic. Whereas G\"{o}del showed that sufficiently strong theories that are $\Pi^0_1$-sound and $\Sigma^0_1$-definable do not prove their own $\Pi^0_1$-soundness, we prove that sufficiently strong theories that are $\Pi^1_1$-sound and $\Sigma^1_1$-definable do not prove their own $\Pi^1_1$-soundness. Our proof does not involve the construction of a self-referential sentence but rather relies on ordinal analysis.
\end{abstract}

\section{Introduction}

The motivation for this project come from two sources: G\"{o}del's second incompleteness theorem and Gentzen's consistency proof of arithmetic. These results are complementary in many ways. In the first place, they jointly form a complicated and ambiguous resolution of Hilbert's problem of proving the consistency of arithmetic. Moreover, Gentzen's proof refines G\"{o}del's result by exhibiting the first example of a non-meta-mathematical arithmetic statement---namely, the statement that $\varepsilon_0$ lacks primitive recursive descending sequences---that is not provable from the Peano axioms. Though G\"{o}del's result is highly general, his proof relies on self-reference, rendering it opaque and mysterious \cite{kotlarski2004incompleteness, salehi2020diagonal, visser2019tarski}. By contrast, Gentzen's proof is concrete but his results are specific to the case of Peano arithmetic. In this paper we prove a version of the second incompleteness theorem that is general like G\"{o}del's but with a proof that is concrete like Gentzen's; in particular, we use the methods of ordinal analysis and do not rely on diagonalization or self-reference.

Let's start by giving a typical statement G\"{o}del's second incompleteness theorem:

\begin{theorem}[G\"{o}del]
No consistent and recursively axiomatizable extension of elementary arithmetic proves its own consistency.
\end{theorem}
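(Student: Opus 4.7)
The plan is to follow the classical route via arithmetization of syntax, the Hilbert--Bernays--L\"{o}b derivability conditions, and the diagonal lemma. Fix a consistent, recursively axiomatizable theory $T$ extending elementary arithmetic. First, I would use the recursive axiomatizability hypothesis to represent the axioms of $T$ by a $\Sigma^0_1$ formula $\mathrm{Ax}_T(x)$, and from it build a $\Sigma^0_1$ proof predicate $\mathrm{Prf}_T(p,x)$ expressing that $p$ codes a $T$-proof of the formula with code $x$. Set $\mathrm{Prov}_T(x) := \exists p\, \mathrm{Prf}_T(p,x)$ and $\mathrm{Con}(T) := \neg \mathrm{Prov}_T(\ulcorner 0=1 \urcorner)$.

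The heart of the argument is to verify, inside $T$, the three derivability conditions: (i) $T \vdash \varphi$ implies $T \vdash \mathrm{Prov}_T(\ulcorner \varphi \urcorner)$; (ii) $T$ proves an internalized modus ponens for $\mathrm{Prov}_T$; and (iii) $T \vdash \mathrm{Prov}_T(\ulcorner \varphi \urcorner) \to \mathrm{Prov}_T(\ulcorner \mathrm{Prov}_T(\ulcorner \varphi \urcorner)\urcorner)$. Conditions (i) and (ii) are essentially bookkeeping, but (iii) is a formalization of provable $\Sigma^0_1$-completeness: every true $\Sigma^0_1$ sentence is provable in $T$, and this fact itself must be provable in $T$. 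I expect (iii) to be the main obstacle, since it requires that elementary arithmetic be capable of internalizing the completeness proof for $\Sigma^0_1$ sentences, which in turn depends on a careful choice of G\"{o}del numbering and proof representation so that the relevant existential witnesses can be bounded and verified within EA.

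Finally, I would apply the diagonal lemma to obtain a fixed point $G$ with $T \vdash G \leftrightarrow \neg \mathrm{Prov}_T(\ulcorner G \urcorner)$. A short argument inside $T$ using (i)--(iii) applied to this biconditional yields $T \vdash \mathrm{Con}(T) \to G$. Hence if $T \vdash \mathrm{Con}(T)$, then $T \vdash G$; applying (i) gives $T \vdash \mathrm{Prov}_T(\ulcorner G \urcorner)$, and combining with the fixed point yields $T \vdash \neg G$, contradicting the assumed consistency of $T$. Thus no such $T$ can prove its own consistency.
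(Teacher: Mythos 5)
Your proposal is the standard and correct classical proof: arithmetize syntax, verify the three Hilbert--Bernays--L\"{o}b derivability conditions (with provable $\Sigma^0_1$-completeness as the delicate third condition, which you rightly flag as the technical crux for elementary arithmetic), apply the diagonal lemma to get $G$ with $T\vdash G\leftrightarrow\neg\mathrm{Prov}_T(\underline{G})$, derive $T\vdash \mathrm{Con}(T)\to G$, and conclude. The paper itself offers no proof of this statement --- it is quoted as classical background --- so there is nothing in the paper for your argument to diverge from. But it is worth noting that your route is precisely the one the paper is written to avoid: the entire point of the paper is that the diagonal-lemma construction of a self-referential sentence is ``opaque and mysterious,'' and the paper's contribution is an incomparable analogue (Theorem \ref{main}) proved instead by ordinal analysis, where the role of your fixed point $G$ is played by a canonical object, a presentation of the proof-theoretic ordinal $|T|_{\mathsf{RE}}$. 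The trade-off is real in both directions. Your classical argument needs only $\Sigma^0_1$-definability of the axioms and $\Pi^0_1$-soundness (indeed mere consistency) over a very weak base theory, and it does not transfer to $\Sigma^1_1$-definable theories, for which no $\Sigma^0_1$ proof predicate need exist and Craig's trick fails. The paper's ordinal-analytic argument handles $\Sigma^1_1$-definable theories and dispenses with self-reference, but pays for this with the stronger hypotheses of $\Pi^1_1$-soundness and extension of $\Sigma^1_1\text{-}\mathsf{AC}_0$, and it does not recover your statement as a special case. So: your proof is correct for the theorem as stated, and it is the right proof of \emph{that} theorem; just be aware that it is methodologically orthogonal to everything else in the paper.
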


Recursive axiomatizability is equivalent to $\Sigma^0_1$-definability by Craig's Trick. Moreover, consistency is provably equivalent (in elementary arithmetic) to $\Pi^0_1$-soundness. Hence, we may restate G\"{o}del's Theorem as follows:

\begin{theorem}[G\"{o}del]\label{godel}
If $T$ is a $\Pi^0_1$-sound and $\Sigma^0_1$-definable extension of elementary arithmetic, then $T$ does not prove its own $\Pi^0_1$-soundness.
\end{theorem}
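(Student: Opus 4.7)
The plan is to run the classical diagonalization argument, adapting the usual treatment of $\mathrm{Con}(T)$ to the slightly stronger notion of $\Pi^0_1$-soundness. Because $T$ is $\Sigma^0_1$-definable, elementary arithmetic supports a canonical $\Sigma^0_1$ provability predicate $\mathrm{Prov}_T(x)$ with the expected formal properties. Applying the diagonal lemma, I obtain a sentence $G$ satisfying
\[
\mathrm{EA} \vdash G \leftrightarrow \neg \mathrm{Prov}_T(\ulcorner G \urcorner).
\]
Since $\mathrm{Prov}_T$ is $\Sigma^0_1$, the sentence $G$ is (equivalent in $\mathrm{EA}$ to) a $\Pi^0_1$ sentence.

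Next, suppose for contradiction that $T$ proves its own $\Pi^0_1$-soundness. Instantiating that universal principle at the particular $\Pi^0_1$ sentence $G$ gives $T \vdash \mathrm{Prov}_T(\ulcorner G \urcorner) \to G$. Substituting via the diagonal equivalence, this becomes $T \vdash \mathrm{Prov}_T(\ulcorner G \urcorner) \to \neg \mathrm{Prov}_T(\ulcorner G \urcorner)$, whence $T \vdash \neg \mathrm{Prov}_T(\ulcorner G \urcorner)$, i.e., $T \vdash G$. Now I step outside the object theory: the external hypothesis that $T$ is $\Pi^0_1$-sound forces $G$ to be true in the standard model, which unfolds to $T \nvdash G$, contradicting the previous line.

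The main obstacle, such as it is, is bookkeeping in the formalization: one must confirm that $\mathrm{EA}$ can recognize (a canonical presentation of) $G$ as a $\Pi^0_1$ sentence, so that $T$'s purported $\Pi^0_1$-soundness principle genuinely applies to $G$, and that the universal instantiation needed to extract the conditional $\mathrm{Prov}_T(\ulcorner G \urcorner) \to G$ is itself formalizable inside $T$. Both steps are routine given the $\Sigma^0_1$-definability of $T$ together with standard closure properties of the arithmetic hierarchy. The structural heart of the proof, and the feature that the paper will eventually seek to avoid, is the appeal to the diagonal lemma producing the self-referential fixed point $G$.
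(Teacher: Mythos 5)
Your argument is correct, and it is worth noting that it does not follow the paper's route. The paper does not reprove this statement at all: it treats Theorem \ref{godel} as a mere restatement of the usual second incompleteness theorem, obtained by (i) Craig's trick, which converts a $\Sigma^0_1$-definable theory into a deductively equivalent primitive recursively axiomatized one, and (ii) the fact that elementary arithmetic proves $\mathrm{Con}(T)$ equivalent to the $\Pi^0_1$-reflection principle for $T$. You instead rerun the diagonal argument directly against the reflection principle, which is a genuinely different decomposition. Your route is in one respect lighter: because you instantiate the assumed reflection principle at the fixed point $G$ itself, you get $T \vdash \mathrm{Prov}_T(\ulcorner G\urcorner) \to G$ for free, hence $T \vdash G$ by pure propositional logic from the fixed-point equation; no derivability conditions (in particular no formalized $\Sigma^0_1$-completeness) are needed, and the final contradiction uses only the external $\Pi^0_1$-soundness hypothesis applied to the $\Pi^0_1$ sentence $G$. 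The reduction route, by contrast, invokes the full machinery behind the classical theorem but only needs the trivial direction $\mathsf{RFN}_{\Pi^0_1}(T) \to \mathrm{Con}(T)$. The bookkeeping you flag is the only real content of the adaptation: one should arrange the fixed point in literally $\Pi^0_1$ form (possible since the diagonal lemma preserves complexity once $\mathrm{Prov}_T$ is put in $\Sigma^0_1$ normal form, e.g.\ after Craig-izing), so that $T$ verifies both that $\ulcorner G\urcorner$ codes a $\Pi^0_1$ sentence and that $\mathsf{True}_{\Pi^0_1}(\ulcorner G\urcorner)\leftrightarrow G$. As you observe, the self-referential construction at the heart of your proof is exactly the device that Theorem \ref{main} is designed to avoid in the second-order setting.
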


We prove the following analogous result for systems of second-order arithmetic:
\begin{theorem}\label{main}
If $T$ is a $\Pi^1_1$-sound and $\Sigma^1_1$-definable extension of $\Sigma^1_1\text{-}\mathsf{AC}_0$, then $T$ does not prove its own $\Pi^1_1$-soundness.
\end{theorem}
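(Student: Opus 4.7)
The strategy is to derive a contradiction from the assumption $T \vdash \text{$\Pi^1_1$-sound}(T)$ by ordinal analysis. Set $|T| := \sup\{|\alpha| : \alpha \text{ is a } \Sigma^1_1\text{-definable linear order and } T \vdash \mathrm{WO}(\alpha)\}$. Since $T$ is $\Pi^1_1$-sound, each such $\alpha$ is genuinely well-founded, so $|T|$ is a countable ordinal; moreover, a padding argument shows that no $\Sigma^1_1$-definable well-ordering of order type $\geq |T|$ is provably well-founded in $T$. The plan is to construct, from the assumption that $T$ proves its own $\Pi^1_1$-soundness, a $\Sigma^1_1$-definable ordering of type $\geq |T|$ whose well-foundedness is a theorem of $T$, thereby producing the contradiction.

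The ordering will be the $\omega$-sum of all $T$-provably well-founded orderings. Since $T$ is $\Sigma^1_1$-definable, the proof predicate $\mathrm{Prf}_T$ is $\Sigma^1_1$. For each $p \in \omega$, let $\alpha_p$ be the computable linear order whose well-foundedness $p$ codes a $T$-proof of (and the empty order otherwise). Define $\mathcal{A}$ on the field $\{(p,x) : x \in \mathrm{field}(\alpha_p)\}$ by $(p,x) <_{\mathcal{A}} (q,y)$ iff $p < q$, or $p = q$ and $x <_{\alpha_p} y$. Then $\mathcal{A}$ is $\Sigma^1_1$-definable, and externally $|\mathcal{A}| = \sum_p |\alpha_p| \geq |T|$.

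The main step is to verify $T \vdash \mathrm{WO}(\mathcal{A})$. Reasoning inside $T$, and using the hypothesized $\Pi^1_1$-soundness of $T$: suppose $f = \langle (p_n, x_n) \rangle_{n \in \omega}$ is $<_{\mathcal{A}}$-descending. The sequence of first coordinates is weakly decreasing in $\omega$ and hence eventually constant at some value $p$; a tail of $f$ is then $<_{\alpha_p}$-descending, so $p$ must code a $T$-proof of $\mathrm{WO}(\alpha_p)$, whence $\alpha_p$ is well-founded --- contradicting the tail descent. The main technical obstacle is executing this argument within $\Sigma^1_1\text{-}\mathsf{AC}_0$: the predicate stating that $p$ codes a $T$-proof of $\mathrm{WO}(\alpha_p)$ is $\Sigma^1_1$, so uniformly extracting the $\alpha_p$'s together with witnesses for $\mathrm{Prf}_T$ and for the tail descent is exactly where $\Sigma^1_1$-choice enters. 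Once $T \vdash \mathrm{WO}(\mathcal{A})$ is secured, combining it with $|\mathcal{A}| \geq |T|$ contradicts the definition of $|T|$.
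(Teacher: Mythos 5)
Your overall architecture is the right one and closely parallels the paper's: build a single $\Sigma^1_1$-definable ordering whose order type dominates the proof-theoretic ordinal of $T$ but whose well-foundedness follows, over $\Sigma^1_1\text{-}\mathsf{AC}_0$, from $\mathsf{RFN}_{\Pi^1_1}(T)$; then conclude by the padding argument. Your internal verification that $\mathsf{RFN}_{\Pi^1_1}(T)$ implies $\mathsf{WF}(\mathcal{A})$, and your identification of where $\Sigma^1_1$-choice is used, are both sound and essentially match the paper's key lemma.

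However, there is a genuine gap at the step ``externally $|\mathcal{A}| = \sum_p |\alpha_p| \geq |T|$.'' You defined $|T|$ as the supremum over \emph{$\Sigma^1_1$-definable} provably well-founded orders (the paper's $|T|_{\mathsf{AN}}$), but your $\alpha_p$ range only over \emph{computable} orders, so the sum is only guaranteed to have type at least $|T|_{\mathsf{RE}}$, the supremum over recursive provably well-founded orders. The inequality you need is exactly the nontrivial identity $|T|_{\mathsf{RE}} = |T|_{\mathsf{AN}}$ for $\Pi^1_1$-sound extensions of $\Sigma^1_1\text{-}\mathsf{AC}_0$ (Theorem \ref{folklore} in the paper), which rests on a formalized version of Spector's $\Sigma^1_1$-bounding theorem (Lemma \ref{rathjen}) and constitutes a substantial portion of the paper; it cannot be waved through. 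Nor is the obvious repair---letting $\alpha_p$ range over $\Sigma^1_1$-presented orders, using a $\Sigma^1_1$ truth predicate to keep $\mathcal{A}$ in $\Sigma^1_1$---free of charge: for a $\Sigma^1_1$ relation $\triangleleft$, linearity is not a $\Pi^1_1$ statement, so $\Pi^1_1$-soundness of $T$ does not guarantee that a $T$-provably well-founded $\Sigma^1_1$ relation is actually a linear order, and the junk summands could prevent $\mathcal{A}$ from being a presentation of an ordinal at all (which the padding argument requires). This is precisely why the paper works throughout with $\Sigma^0_1$-presented orders, for which $\mathsf{LO}(\triangleleft)$ is arithmetic, and then transfers to the $\Sigma^1_1$ setting via Theorem \ref{folklore}. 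On the positive side, your $\omega$-sum construction of $\mathcal{A}$ is a genuine variant of the paper's $\prec_T$ (which restricts a fixed $\Sigma^0_1$ presentation $\prec_\star$ of $|T|_{\mathsf{RE}}$ to the provably-embeddable part), and it has the minor advantage of not presupposing the existence of such a $\prec_\star$, i.e., of not needing the unformalized Spector bounding for that step; but the $|T|_{\mathsf{RE}}$ versus $|T|_{\mathsf{AN}}$ transfer remains indispensable and must be supplied.
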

$\Pi^1_1$-soundness is a strictly stronger condition than $\Pi^0_1$-soundness. However, $\Sigma^1_1$-definability is a strictly weaker condition than $\Sigma^0_1$-definability. Hence, Theorem \ref{main} is neither weaker nor stronger than G\"{o}del's Theorem but incomparable with it.

%\begin{remark}
%To what extent is Theorem \ref{main} sharp? Can the assumptions be weakened? The assumption that $T$ is $\Sigma^1_1$-definable cannot be changed to $\Pi^1_1$-definability; otherwise one could take the theory $Th_{\Pi^1_1}(\mathbb{N})$ of all true $\Pi^1_1$ sentences. $Th_{\Pi^1_1}(\mathbb{N})$ is $\Pi^1_1$ definable. Yet it proves its own $\Pi^1_1$-soundness, since:
%$$\mathsf{RFN}_{\Pi^1_1}\big(Th_{\Pi^1_1}(\mathbb{N})\big):= %\forall \varphi \in \Pi^1_1\big( \mathsf{True}_{\Pi^1_1}(\varphi) \to \mathsf{True}_{\Pi^1_1}(\varphi)\big)$$
%is a logical truth.
%\end{remark}

Let's take a brief look at the ideas motivating the proof. In what follows $\mathsf{WF}(\prec)$ is a sentence expressing the well-foundedness of $\prec$:
$$\mathsf{WF}(\prec):= \forall X\big(\exists x\in X \to \exists x\in X \; \forall y \in X \; \neg y \prec x \big) ;$$
$\mathsf{RFN}_{\Pi^1_1}(T)$ is a sentence naturally expressing the $\Pi^1_1$-soundness of $T$:
 $$\mathsf{RFN}_{\Pi^1_1}(T) := \forall \varphi\in\Pi^1_1 \big( \mathsf{Pr}_T(\varphi) \to \mathsf{True}_{\Pi^1_1}(\varphi) \big);$$ and the proof-theoretic ordinal $|T|_{\mathsf{AN}}$ of a theory $T$ is the supremum of the ordinals $\alpha$ for which there is some $\Sigma^1_1$ presentation $\prec$ of $\alpha$ such that $T\vdash \mathsf{WF}(\prec)$. 
 
 Assuming that $T$ is $\Pi^1_1$-sound and $\Sigma^1_1$-definable, Spector's $\Sigma^1_1$-bounding theorem implies that $|T|_{\mathsf{AN}}$ is strictly less than $\omega_1^{\mathsf{CK}}$, whence $|T|_{\mathsf{AN}}$ has some $\Sigma^1_1$ presentation. For any $\Sigma^1_1$-presentation $\prec$ of $|T|_{\mathsf{AN}}$, the following is true by definition:
\begin{equation}
  T \nvdash \mathsf{WF}(\prec).
\end{equation}
We then need to show that there is \emph{at least one} $\Sigma^1_1$-presentation $\prec$ of $|T|_{\mathsf{AN}}$ such that:
\begin{equation}
 T\vdash \mathsf{RFN}_{\Pi^1_1}(T)\to\mathsf{WF}(\prec).
\end{equation}
For then, from claims (1) and (2), we infer that $T\nvdash \mathsf{RFN}_{\Pi^1_1}(T)$.

%Before continuing we will briefly compare our proof of Theorem \ref{main} to other proofs of versions of the second incompleteness theorem. G\"{o}del's proof of Theorem \ref{godel} is notorious for its use of self-reference, and many claim that the reliance of self-reference renders the proof opaque and mysterious; see the discussion in \cite{kotlarski2004incompleteness, salehi2020diagonal, visser2019tarski}. By contrast, our proof of Theorem \ref{main} does not involve the construction of any self-referential sentences. 

This proof is analogous to a folklore proof of a different version of the second incompleteness theorem, namely, that no $\Pi^0_2$-sound and $\Sigma^0_1$-definable theory $T$ proves its own $\Pi^0_2$-soundness. In this folklore proof one first defines a recursive function $f_T$ that is not provably total in $T$ by diagonalizing against the set of provably total recursive functions of $T$; one then shows that the totality of $f_T$ is $T$-provably equivalent to the $\Pi^0_2$-soundness of $T$. See \cite{beklemishev1997induction} for a detailed proof. Just as the class of the provably total recursive functions of $T$ is the canonical ``invariant'' measuring the $\Pi^0_2$-strength of $T$, the proof-theoretic ordinal of $T$ is the canonical ``invariant'' measuring the $\Pi^1_1$-strength of $T$. And just as the non-provability of $\Pi^0_2$-soundness is derived by defining a total recursive function in terms of the canonical $\Pi^0_2$-invariant, we derive Theorem \ref{main} by defining a well-ordering in terms of the canonical $\Pi^1_1$-invariant.

A slight modification of our proof of Theorem \ref{main} delivers a stronger result:
\begin{theorem}\label{stronger}
There is no sequence $(T_n)_{n<\omega}$ of $\Pi^1_1$-sound and $\Sigma^1_1$-definable extensions of $\Sigma^1_1\text{-}\mathsf{AC}_0$ such that for each $n$, $T_n \vdash \mathsf{RFN}_{\Pi^1_1}(T_{n+1})$.
\end{theorem}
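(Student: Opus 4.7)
The plan is to argue by contradiction, using the construction from the proof of Theorem \ref{main} to show that the proof-theoretic ordinals $|T_n|_{\mathsf{AN}}$ form an infinite strictly descending sequence below $\omega_1^{\mathsf{CK}}$.

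Assume such a sequence $(T_n)_{n<\omega}$ exists, and set $\alpha_n := |T_n|_{\mathsf{AN}}$; Spector's $\Sigma^1_1$-bounding theorem gives $\alpha_n < \omega_1^{\mathsf{CK}}$. The key construction underlying the proof of Theorem \ref{main}, applied to $T_{n+1}$, produces a $\Sigma^1_1$-presentation $\prec_{n+1}$ of $\alpha_{n+1}$ with
\[
T_{n+1} \vdash \mathsf{RFN}_{\Pi^1_1}(T_{n+1}) \to \mathsf{WF}(\prec_{n+1}).
\]
From the hypothesis $T_n \vdash \mathsf{RFN}_{\Pi^1_1}(T_{n+1})$ one would then conclude $T_n \vdash \mathsf{WF}(\prec_{n+1})$, placing $\alpha_{n+1}$ in the set whose supremum defines $\alpha_n$, so that $\alpha_{n+1} \leq \alpha_n$.

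To promote this to a strict inequality, I would show that each $\alpha_n$ is a limit ordinal: given any $\Sigma^1_1$-presentation $\prec$ of $\beta$ with $T_n \vdash \mathsf{WF}(\prec)$, adjoining a new top element produces a $\Sigma^1_1$-presentation of $\beta+1$ whose well-foundedness is still $T_n$-provable. Hence the supremum defining $\alpha_n$ is never attained, so $\alpha_{n+1} < \alpha_n$ for every $n$. The resulting infinite strictly descending sequence $\alpha_0 > \alpha_1 > \cdots$ then contradicts the well-foundedness of the ordinals.

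The principal obstacle is the step $T_n \vdash \mathsf{WF}(\prec_{n+1})$. Because the implication $\mathsf{RFN}_{\Pi^1_1}(T_{n+1}) \to \mathsf{WF}(\prec_{n+1})$ is not manifestly $\Pi^1_1$, one cannot straightforwardly reflect on its $T_{n+1}$-provability using $\mathsf{RFN}_{\Pi^1_1}(T_{n+1})$. I expect to handle this by exploiting the uniformity of the construction of $\prec_{n+1}$ from a $\Sigma^1_1$ definition of $T_{n+1}$: the derivation that $T_{n+1}$ carries out from the hypothesis $\mathsf{RFN}_{\Pi^1_1}(T_{n+1})$ can be reproduced inside any extension of $\Sigma^1_1\text{-}\mathsf{AC}_0$ in which that reflection principle is available, and $T_n$ is such an extension by assumption.
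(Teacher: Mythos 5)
Your proposal is correct and follows essentially the same route as the paper: apply the key-lemma construction to each $T_{n+1}$, observe that the implication $\mathsf{RFN}_{\Pi^1_1}(T_{n+1}) \to \mathsf{WF}(\prec_{n+1})$ is provable already in $\Sigma^1_1\text{-}\mathsf{AC}_0$ (which is exactly how the ``obstacle'' you flag is resolved, since $T_n$ extends $\Sigma^1_1\text{-}\mathsf{AC}_0$ and proves the antecedent), and conclude that the ordinals $|T_n|_{\mathsf{AN}}$ form an infinite strictly descending sequence. Your explicit successor-closure argument for strictness of $|T_{n+1}|_{\mathsf{AN}} < |T_n|_{\mathsf{AN}}$ is the standard justification that the paper leaves implicit.
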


To see that Theorem \ref{stronger} implies Theorem \ref{main}, note that if $T$ were a counter-example to Theorem \ref{main} then we would get a counter-example to Theorem \ref{stronger} by letting $T=T_n$ for each $n$. Theorem \ref{stronger} extends earlier work \cite{pakhomov2018reflection, pakhomov2021reflection} of Pakhomov and the author, who proved the following:
\begin{theorem}[Pakhomov--W.]\label{pakhomov-first}
There is no sequence $(T_n)_{n<\omega}$ of $\Pi^1_1$-sound and $\Sigma^0_1$-definable extensions of $\mathsf{ACA}_0$ such that for each $n$, $T_n \vdash \mathsf{RFN}_{\Pi^1_1}(T_{n+1})$.
\end{theorem}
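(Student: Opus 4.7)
The plan is to reduce Theorem \ref{stronger} to the well-foundedness of the ordinals by showing that the hypothesized sequence would force $|T_n|_{\mathsf{AN}}$ to descend strictly in $n$. Suppose for contradiction that such a sequence $(T_n)_{n<\omega}$ exists. The first step is to upgrade the construction sketched for Theorem \ref{main} to a uniform version: extract a single $\Sigma^1_1$-formula $\Psi$ so that, for every $\Pi^1_1$-sound and $\Sigma^1_1$-definable extension $T$ of $\Sigma^1_1\text{-}\mathsf{AC}_0$, the instance $\prec_T := \Psi(\lceil T \rceil, \cdot, \cdot)$ is a $\Sigma^1_1$-presentation of $|T|_{\mathsf{AN}}$ and the conditional
\[\mathsf{RFN}_{\Pi^1_1}(T) \to \mathsf{WF}(\prec_T)\]
is already provable in the base theory $\Sigma^1_1\text{-}\mathsf{AC}_0$, and hence in every $T_n$.

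Given this uniform version, apply it with $T = T_{n+1}$ to obtain $\prec_{n+1}$ of rank $|T_{n+1}|_{\mathsf{AN}}$ with $T_n \vdash \mathsf{RFN}_{\Pi^1_1}(T_{n+1}) \to \mathsf{WF}(\prec_{n+1})$. Combined with the hypothesis $T_n \vdash \mathsf{RFN}_{\Pi^1_1}(T_{n+1})$, modus ponens inside $T_n$ yields $T_n \vdash \mathsf{WF}(\prec_{n+1})$. Appending a new maximum element to $\prec_{n+1}$ gives a $\Sigma^1_1$-presentation of $|T_{n+1}|_{\mathsf{AN}} + 1$ whose well-foundedness is still $T_n$-provable, so by the definition of the proof-theoretic ordinal,
\[|T_n|_{\mathsf{AN}} \geq |T_{n+1}|_{\mathsf{AN}} + 1 > |T_{n+1}|_{\mathsf{AN}}.\]
Iterating in $n$ produces an infinite strictly descending chain $|T_0|_{\mathsf{AN}} > |T_1|_{\mathsf{AN}} > \cdots$ of ordinals, the required contradiction.

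The main obstacle is promoting the conditional from $T$ to the common base theory. This should be feasible because $\prec_T$ is specified by a canonical $\Sigma^1_1$ recipe in a code for $T$, but it requires auditing the proof of Theorem \ref{main} to confirm that the verification of $\mathsf{RFN}_{\Pi^1_1}(T) \to \mathsf{WF}(\prec_T)$ uses only resources available in $\Sigma^1_1\text{-}\mathsf{AC}_0$. A secondary delicacy is that the conditional is not itself $\Pi^1_1$, so one cannot simply apply $\mathsf{RFN}_{\Pi^1_1}(T_{n+1})$ inside $T_n$ to a $T_{n+1}$-proof of it; if the natural proof of Theorem \ref{main} does not already discharge $\mathsf{RFN}$ in the base, a fallback is to factor the argument through a genuinely $\Pi^1_1$ intermediate sentence that $T_{n+1}$ proves from no assumption, and use reflection to import just that sentence into $T_n$ before finishing the well-foundedness argument there.
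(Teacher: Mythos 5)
There is a genuine gap: you have proved Theorem \ref{new-pw} (= Theorem \ref{stronger}), not the statement you were asked to prove. Theorem \ref{pakhomov-first} concerns $\Sigma^0_1$-definable extensions of $\mathsf{ACA}_0$, and nothing in its hypotheses guarantees that the $T_n$ contain $\Sigma^1_1\text{-}\mathsf{AC}_0$; your argument uses that stronger base theory in two essential places. First, to convert the base-theory conditional $\mathsf{RFN}_{\Pi^1_1}(T_{n+1}) \to \mathsf{WF}(\prec_{T_{n+1}})$ into a $T_n$-theorem you need $T_n \supseteq \Sigma^1_1\text{-}\mathsf{AC}_0$. Second, and more seriously, the uniform construction (Lemma \ref{key-lemma}) yields a $\Sigma^1_1$ presentation of $|T_{n+1}|_{\mathsf{RE}}$, the sup of the $T_{n+1}$-provably well-founded \emph{recursive} orderings; since $\prec_{T_{n+1}}$ itself is only $\Sigma^1_1$, the conclusion $T_n \vdash \mathsf{WF}(\prec_{T_{n+1}})$ gives $|T_n|_{\mathsf{AN}} > |T_{n+1}|_{\mathsf{RE}}$. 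To chain these inequalities into an infinite descending sequence of ordinals you must identify $|T_{n+1}|_{\mathsf{RE}}$ with $|T_{n+1}|_{\mathsf{AN}}$, and that identification (Theorem \ref{folklore}) is only available for theories extending $\Sigma^1_1\text{-}\mathsf{AC}_0$. The paper is explicit on this point: the ordinal-descent proof is given ``for $\Sigma^1_1$-definable extensions of $\Sigma^1_1\text{-}\mathsf{AC}_0$ rather than for $\Sigma^0_1$-definable extensions of $\mathsf{ACA}_0$,'' the two theorems are declared incomparable, and weakening the base theory to $\mathsf{ACA}_0$ is left open (cf.\ Question \ref{weaken-theory}).

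The proof of Theorem \ref{pakhomov-first} in the cited earlier work proceeds by an entirely different route: one shows that the theory $\mathsf{ACA}_0 + \varphi$, where $\varphi$ asserts that such a descending sequence of theories exists, proves its own consistency, and then one invokes G\"{o}del's second incompleteness theorem. Your uniformization of the key lemma and the device of appending a top element to $\prec_{T_{n+1}}$ to secure a strict ordinal drop are both sound, and they do correctly reprove Theorem \ref{new-pw}; but that theorem neither implies nor is implied by Theorem \ref{pakhomov-first}, so the argument as it stands does not establish the stated result.
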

Pakhomov and the author proved Theorem \ref{pakhomov-first} to provide an explanation for the apparent pre-well-ordering of natural theories by proof-theoretic strength; see \cite{walsh2021hierarchy} for a discussion of this phenomenon. Theorem \ref{stronger} extends this explanation to the new setting of $\Sigma^1_1$-definable theories. Whereas the proof of Theorem \ref{pakhomov-first} appeals to the second incompleteness theorem and makes no mention of proof-theoretic ordinals, our proof of Theorem \ref{stronger} uses ordinal analysis and does not appeal to any version of the second incompleteness theorem. Note that Theorem \ref{stronger} is neither stronger nor weaker than Theorem \ref{pakhomov-first}; the former requires the stronger hypothesis that $T$ extend $\Sigma^1_1\text{-}\mathsf{AC}_0$ but only the weaker hypothesis that $T$ be $\Sigma^1_1$-definable.

The main tool that we use to derive Theorem \ref{main} and Theorem \ref{stronger} is Spector's $\Sigma^1_1$-bounding theorem. Though the standard proofs (e.g., \cite{sacks2017higher} Chapter 1, Corollary 5.5) of Spector's theorem rely on diagonalization, there is an alternate diagonalization-free proof due to Beckmann and Pohlers \cite{beckmann1998applications}. This latter proof derives $\Sigma^1_1$-bounding from an analysis of cut-free infinitary derivations. In particular, $\Sigma^1_1$-bounding is derived from a result known as ``the boundedness theorem,'' which roughly states that for arithmetically definable well-orders $\prec$, the order-type of $\prec$ cannot exceed the depth of the shortest proof of the well-foundedness of $\prec$ in $\omega$-logic. Versions of the boundedness theorem are already implicit in Gentzen's proof \cite{gentzen1969provability} that $\mathsf{PA}$ does not prove the primitive recursive well-foundedness of $\varepsilon_0$. Note that Gentzen's proof of this independence result does \emph{not} appeal to G\"{o}del's second incompleteness theorem and does not rely on self-reference but rather involves a combinatorial analysis of proofs in $\mathsf{PA}$.

We also rely on a \emph{formalized} version of Spector's theorem. Roughly, the formalized version says that any for any $\Sigma^1_1$ predicate $H$, if $\mathsf{ACA}_0$ proves ``$H$ is a set of recursive ordinals,'' then for some $e$, $\mathsf{ACA}_0$ proves ``$e$ is a recursive ordinal but $\neg H(e)$.'' The standard proof of the formalized version of $\Sigma^1_1$-bounding uses the recursion theorem to define a recursive function. Though this is not exactly the construction of a self-referential sentence, definitions using the recursion theorem share the opacity of constructions of sentences using the fixed point lemma. Accordingly, we provide a new proof of this formalized version of Spector's theorem. The new proof uses the same techniques Gentzen used to prove the boundedness theorem and that Beckmann and Pohlers used to prove the $\Sigma^1_1$-bounding theorem. Thus, we do not rely on self-reference or diagonalization in any form.

%In our proof of Theorem \ref{main} we appeal to two folklore results. Since some of the interest in Theorem \ref{main} is in the distinct nature of its proof, we devote space to discussing these folklore results and their proofs. Though we believe that $\Sigma^1_1$-bounding may be regarded as one of the core steps in our argument, it occurs \emph{only} in the proofs of these folklore results.

Here is our plan for the rest of the paper. In \textsection \ref{prelims} we cover some preliminary material, including definitions and notation; we also discuss two folklore results that we use to prove the main theorems. In \textsection \ref{main-section} we provide proofs of Theorem \ref{main} and Theorem \ref{stronger}. In \textsection \ref{avoiding-diag} we provide an alternate proof using infinitary derivations of the formalized version of Spector's $\Sigma^1_1$-bounding theorem. Finally, in \textsection \ref{open-problems} we present some open problems concerning the optimality of Theorem \ref{main}.

%%%%%%%%%%%%%%%%%%%%%%%%%%%%%%%%%
%%%%%%%%%%%%%%%%%%%%%%%%%%%%%%%%%
%%%%%%%%%%%%%%%%%%%%%%%%%%%%%%%%%

\section{Preliminaries}\label{prelims}

One of the central concepts in proof theory is that of a proof-theoretic ordinal. To say what proof-theoretic ordinals are, we must say what a presentation of an ordinal is.

\begin{definition}
For a syntactic complexity class $\Gamma$, a $\Gamma$ \emph{presentation} of an ordinal $\alpha$ is a $\Gamma$ formula that defines an ordering of order-type $\alpha$ over the standard structure $\big(\mathbb{N},\mathcal{P}(\mathbb{N})\big)$.
\end{definition}

We now present two definitions of ``proof-theoretic ordinal,'' both of which are necessary for our proof.

\begin{definition}
Let $|T|_{\mathsf{RE}}$ be the supremum of the ordinals $\alpha$ for which there is some $\Sigma^0_1$ presentation $\prec$ of $\alpha$ such that $T\vdash \mathsf{WF}(\prec)$.
\end{definition}

\begin{definition}
Let $|T|_{\mathsf{AN}}$ be the supremum of the ordinals $\alpha$ for which there is some $\Sigma^1_1$ presentation $\prec$ of $\alpha$ such that $T\vdash \mathsf{WF}(\prec)$.
\end{definition}

The $\mathsf{AN}$ in the notation $|T|_{\mathsf{AN}}$ means \emph{analytic}. Indeed, $|T|_{\mathsf{AN}}$ is the supremum of the $T$-provably well-founded \emph{analytic} linear orders, where analytic means lightface $\Sigma^1_1$.

By definition, $|T|_{\mathsf{RE}}\leq |T|_{\mathsf{AN}} \leq  \omega_1^{\mathsf{CK}}$. However, we can say more about the relationship between these three values if we make some assumptions about $T$. In the following subsections we will describe these results, which belong to mathematical folklore.

\subsection{The first folklore result}\label{first-folk}

If $T$ is $\Pi^1_1$-sound and $\Sigma^1_1$-definable, we can say more about the relationship between $|T|_{\mathsf{AN}}$ and $\omega_1^{\mathsf{CK}}$.

\begin{theorem}[Folklore]\label{bounding}
If $T$ is $\Pi^1_1$-sound and $\Sigma^1_1$-definable, then $|T|_{\mathsf{RE}}<\omega_1^{\mathsf{CK}}$.
\end{theorem}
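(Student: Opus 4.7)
The plan is to deduce this from Spector's $\Sigma^1_1$-bounding theorem, which states that if $A \subseteq \omega$ is a $\Sigma^1_1$ set of indices of recursive well-orderings, then $\sup\{|{\prec_e}| : e\in A\} < \omega_1^{\mathsf{CK}}$. So the task reduces to packaging the provably well-founded recursive orders of $T$ as such a $\Sigma^1_1$ set.

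First, I would fix an effective enumeration $(\varphi_e)_{e\in\omega}$ of the $\Sigma^0_1$ formulas with two free number variables, letting $\prec_e$ denote the binary relation on $\mathbb{N}$ defined by $\varphi_e$ in the standard model. Then I would define
$$A := \big\{ e\in\omega : \varphi_e \text{ defines a linear order on its field and } T \vdash \mathsf{WF}(\prec_e)\big\}.$$
The first conjunct is arithmetical, hence certainly $\Sigma^1_1$. For the second conjunct, the key observation is that since the axioms of $T$ form a $\Sigma^1_1$ set, the provability predicate $\mathsf{Pr}_T(\cdot)$ is itself $\Sigma^1_1$: a proof from $T$ is a finite sequence of formulas, witnessed by a finite sequence of $\Sigma^1_1$-witnesses certifying axiomhood of the axioms used. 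Hence $A$ is $\Sigma^1_1$.

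Next, I would use $\Pi^1_1$-soundness of $T$ to conclude that every $e \in A$ really is an index of a recursive well-ordering. Each $\mathsf{WF}(\prec_e)$ is a $\Pi^1_1$ sentence, so if $T\vdash \mathsf{WF}(\prec_e)$ then $\prec_e$ is genuinely well-founded; combined with the first conjunct in the definition of $A$, $\prec_e$ is a recursive well-ordering with some ordinal order-type $|\prec_e| < \omega_1^{\mathsf{CK}}$. Then applying Spector's $\Sigma^1_1$-bounding theorem to $A$, we obtain a single $\beta < \omega_1^{\mathsf{CK}}$ with $|\prec_e| < \beta$ for every $e \in A$. Since by definition $|T|_{\mathsf{RE}} = \sup\{|\prec_e| : e\in A\}$, we get $|T|_{\mathsf{RE}} \le \beta < \omega_1^{\mathsf{CK}}$.

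The only real obstacle is the quantifier-complexity bookkeeping needed to verify that $A$ is $\Sigma^1_1$ when $T$ is merely $\Sigma^1_1$-definable rather than recursively axiomatizable; one must be careful that the existentially quantified $\Sigma^1_1$-witnesses for axiomhood of the finitely many axioms appearing in a proof can be bundled into a single set-quantifier without raising complexity. Everything else is immediate once Spector's theorem is invoked as a black box.
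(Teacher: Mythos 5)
Your argument is correct and takes essentially the same route as the paper, which likewise deduces Theorem \ref{bounding} directly from Spector's $\Sigma^1_1$-bounding theorem, with the same two key points: $\Pi^1_1$-soundness guarantees that the provably well-founded orders are genuinely well-founded, and $\Sigma^1_1$-definability of $T$ keeps the provability predicate (hence your index set $A$) at complexity $\Sigma^1_1$. The only cosmetic difference is that the paper quotes Spector's theorem in the form ``a single $\Sigma^1_1$ presentation of an ordinal has order type below $\omega_1^{\mathsf{CK}}$'' rather than the index-set bounding form you invoke; the two are interderivable (amalgamate the orders indexed by $A$ into one $\Sigma^1_1$ sum order), so your quantifier-complexity bookkeeping is exactly the work the paper's ``follows immediately'' elides.
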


Theorem \ref{bounding} follows immediately from Spector's $\Sigma^1_1$-bounding  theorem:

\begin{theorem}[Spector]
For any $\Sigma^1_1$ presentation $\prec$ of an ordinal, $\mathsf{otyp}(\prec)<\omega_1^{\mathsf{CK}}$.
\end{theorem}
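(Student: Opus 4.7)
The plan is to derive Spector's theorem from the classical fact that the set $\mathsf{WO}:=\{e : \prec_e \text{ is a recursive well-ordering}\}$ is $\Pi^1_1$-complete, and in particular not $\Sigma^1_1$. Fix a $\Sigma^1_1$ presentation $\prec$ of an ordinal $\alpha$, and suppose for contradiction that $\alpha \geq \omega_1^{\mathsf{CK}}$. The strategy is to use $\prec$ to manufacture a $\Sigma^1_1$ definition of $\mathsf{WO}$, yielding the desired contradiction.

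The first step is to observe that, under this assumption, every recursive ordinal is realized as the order-type of an initial segment of $\prec$. Consequently, for a recursive linear order $\prec_e$, one has the equivalence: $\prec_e$ is a well-ordering if and only if there exists an order-preserving embedding of $\prec_e$ into $\prec$. The forward direction holds because $\mathsf{otyp}(\prec_e) < \omega_1^{\mathsf{CK}} \leq \alpha$ lets us map $\prec_e$ isomorphically onto the corresponding initial segment of $\prec$; the reverse direction holds because the preimage of a well-order under an embedding is well-founded.

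The second step is to verify that the right-hand side of this equivalence is $\Sigma^1_1$ uniformly in $e$. The existence of an embedding reads ``$\exists f\, \forall x, y (x \prec_e y \to f(x) \prec f(y))$'', and expanding the occurrence of $\prec$ using its $\Sigma^1_1$ definition puts this into the form $\exists f\, \exists Z\, \theta(e,f,Z)$ with $\theta$ arithmetical; pulling the two set existentials to the front and conjoining with the arithmetical condition that $\prec_e$ be a linear order gives a $\Sigma^1_1$ definition of $\mathsf{WO}$, contradicting $\Pi^1_1$-completeness.

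The main obstacle is really the invocation of the $\Pi^1_1$-completeness of $\mathsf{WO}$, which is traditionally established via Kleene normal form for $\Pi^1_1$ predicates together with the Kleene--Brouwer ordering, and which the introduction flags as relying on diagonalization in some presentations (e.g.\ \cite{sacks2017higher}). If one wishes to sidestep this---and, in the spirit of the paper, to avoid self-reference entirely---one can instead obtain Spector's theorem from the Beckmann--Pohlers boundedness theorem for cut-free infinitary $\omega$-logic derivations, which is the route taken up in \textsection \ref{avoiding-diag}.
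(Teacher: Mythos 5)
Your argument is correct, and it is exactly the standard proof that the paper itself points to: reduce $\Sigma^1_1$-bounding to the fact that $\mathsf{WO}$ (equivalently, Kleene's $\mathcal{O}$) is not $\Sigma^1_1$-definable, as in \cite{sacks2017higher}, Chapter 1, Corollary 5.5, the only elided step being that moving the set existential of $\prec$ past the universal number quantifiers in the embedding condition is the standard closure of $\Sigma^1_1$ under number quantification over the standard structure. You also correctly locate the hidden diagonalization in the $\Pi^1_1$-completeness (non-$\Sigma^1_1$-ness) of $\mathsf{WO}$, which is precisely why the paper notes the alternative diagonalization-free route via the Beckmann--Pohlers boundedness lemma \cite{beckmann1998applications}, whose formalized version is developed in \textsection\ref{avoiding-diag}.
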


Standard proofs of Spector's $\Sigma^1_1$-bounding appeal to the fact that Kleene's $\mathcal{O}$ is not $\Sigma^1_1$-definable. Note that the latter is typically proved using an ordinary diagonalization argument; see, e.g., \cite{sacks2017higher} Chapter 1, Theorem 5.4.

However, there is an alternate proof due to Beckmann and Pohlers \cite{beckmann1998applications} of Spector's Theorem that does \emph{not} use diagonalization. The Beckmann--Pohlers proof proceeds by analyzing the structure of infinitary cut-free derivations. Beckmann and Pohlers derive $\Sigma^1_1$-bounding a result known as ``the boundedness lemma,'' which they claim is essentially implicit in Gentzen's proof of the $\mathsf{PA}$ non-derivability of $\varepsilon_0$-induction. Accordingly, when we appeal to Theorem \ref{bounding}, we are appeal to a result that has a Gentzen-style non-diagonalization proof.

\subsection{The second folklore result}

We can say more about the relationship between $|T|_{\mathsf{RE}}$ and $|T|_{\mathsf{AN}}$ for all $\Pi^1_1$-sound $T$ that extend $\Sigma^1_1\text{-}\mathsf{AC}_0$. Recall that $\Sigma^1_1\text{-}\mathsf{AC}_0$ is the theory whose axioms are those of $\mathsf{ACA}_0$ plus each instance of the schema:
$$\forall n \exists X \varphi(n,X) \to \exists Y\forall n \varphi\big(n, (Y)_n\big)$$
where $\varphi(n,X)$ is a $\Sigma^1_1$ formula in which $Y$ does not occur and where
$$(Y)_n = \{ i \mid (i,n)\in Y\}.$$

\begin{theorem}[Folklore]\label{folklore}
If $T$ is a $\Pi^1_1$-sound extension of $\Sigma^1_1\text{-}\mathsf{AC}_0$, then $|T|_{\mathsf{RE}}=|T|_{\mathsf{AN}}$.
\end{theorem}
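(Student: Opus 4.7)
The inequality $|T|_{\mathsf{RE}} \leq |T|_{\mathsf{AN}}$ is immediate from the definitions, so the task is to establish the reverse. Given $\alpha < |T|_{\mathsf{AN}}$ together with a $\Sigma^1_1$ presentation $\prec$ of some $\beta \geq \alpha$ for which $T \vdash \mathsf{WF}(\prec)$, the plan is to construct, uniformly from $\prec$, a $\Sigma^0_1$ presentation $\prec^{*}$ of an ordinal $\geq \beta$ such that $T \vdash \mathsf{WF}(\prec) \to \mathsf{WF}(\prec^{*})$; modus ponens then gives $T \vdash \mathsf{WF}(\prec^{*})$, whence $\alpha \leq |T|_{\mathsf{RE}}$.

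For the construction, I would first pass to the Kleene normal form, producing a primitive recursive relation $R$ with $x \prec y \iff \exists f\, \forall n\, R(x, y, f\restriction n)$, so that $x \prec y$ iff the primitive recursive tree $T_{x,y} := \{\sigma : R(x, y, \sigma)\}$ is ill-founded. Next I would form the primitive recursive ``descent-with-witnesses'' tree $\mathcal{T}$ whose level-$n$ nodes are tuples $\langle x_0, \dots, x_n, \sigma_0, \dots, \sigma_{n-1}\rangle$ with $\sigma_i \in T_{x_{i+1}, x_i}$ and $|\sigma_i| = n - i$; an infinite branch of $\mathcal{T}$ simultaneously exhibits a $\prec$-descending chain $(x_n)$ together with infinite-path witnesses $f_i$ for each $x_{i+1} \prec x_i$. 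Let $\prec^{*}$ be the Kleene--Brouwer ordering of $\mathcal{T}$; this is primitive recursive, hence $\Sigma^0_1$.

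For the external lower bound on order type, a routine induction on $\mathsf{rank}_{\prec}(x)$ shows that the subtree of $\mathcal{T}$ rooted at any node ending in $x$ has tree-rank at least $\mathsf{rank}_{\prec}(x)$, so the Kleene--Brouwer order has order type at least $\beta$. The hard part will be the formalization step: in $T$, I would derive $\mathsf{WF}(\prec) \to \mathsf{WF}(\prec^{*})$ by supposing $\prec^{*}$ has a descending chain, extracting via the standard property of the Kleene--Brouwer order an infinite branch $b$ of $\mathcal{T}$, and then reading off from $b$ both the candidate sequence $(x_n)$ and the witness paths $f_i$ in the trees $T_{x_{i+1}, x_i}$ to produce a genuine infinite descending chain in $\prec$. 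This is where the hypothesis $T \supseteq \Sigma^1_1\text{-}\mathsf{AC}_0$ enters: uniformly assembling the $\Sigma^1_1$-witnesses for $x_{i+1} \prec x_i$ into a single second-order object, so that the set-quantifier in $\mathsf{WF}(\prec)$ actually receives an input, is precisely an application of $\Sigma^1_1$-choice. Once the implication $\mathsf{WF}(\prec) \to \mathsf{WF}(\prec^{*})$ is proved in $T$, combining with $T \vdash \mathsf{WF}(\prec)$ yields $T \vdash \mathsf{WF}(\prec^{*})$ and completes the argument.
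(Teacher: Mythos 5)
Your argument is correct in outline, but it takes a genuinely different route from the paper. The paper does not prove Theorem \ref{folklore} directly: it cites Rathjen's derivation of the result from the \emph{formalized} $\Sigma^1_1$-bounding lemma (Lemma \ref{rathjen}), which for each relevant $\Sigma^1_1$ presentation produces an index $e$ with $\mathsf{ACA}_0 \vdash e\in\mathfrak{W}_{Rec}\wedge\neg H(e)$, and the bulk of Section \ref{avoiding-diag} is then devoted to reproving that lemma without the recursion theorem. You instead give the direct ``effective boundedness'' construction: Kleene normal form, the descent-with-witnesses tree $\mathcal{T}$, and its Kleene--Brouwer ordering. This buys a self-contained argument (and one that is diagonalization-free, very much in the spirit of the paper), at the cost of having to verify by hand the three standard facts you lean on---the normal form theorem for $\Sigma^1_1$ formulas, the extraction of an infinite branch from a Kleene--Brouwer-descending sequence, and the inequality between the order type of the Kleene--Brouwer ordering and the rank of the tree---all of which are indeed available over $\mathsf{ACA}_0$.

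Two points need attention. First, the rank lower bound as stated is false for \emph{arbitrary} nodes of $\mathcal{T}$ ending in $x$: a node one of whose witness strings $\sigma_i$ is a dead end of $T_{x_{i+1},x_i}$ is a leaf of $\mathcal{T}$ no matter how large $\mathsf{rank}_\prec(x)$ is. The induction must be restricted to nodes all of whose witness strings extend to infinite branches of their respective trees; since level-$0$ nodes carry no witnesses, the restricted claim still gives $\mathsf{rank}(\mathcal{T})\geq\beta$. Second, your stated use of $\Sigma^1_1\text{-}\mathsf{AC}_0$ is misplaced: in your construction the witnesses for the relations $x_{i+1}\prec x_i$ arrive already bundled into the single branch $b$ of $\mathcal{T}$, so no choice principle is needed to assemble them---that is exactly what the tree was designed to accomplish. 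As far as I can tell, every step of your proof of $\mathsf{WF}(\prec)\to\mathsf{WF}(\prec^{*})$ goes through in $\mathsf{ACA}_0$, and $\Pi^1_1$-soundness is never invoked. That would make your argument \emph{stronger} than the folklore theorem as stated, which bears directly on Question \ref{weaken-theory}; so either you have a hidden appeal to choice somewhere (pin down precisely which base theory proves the implication), or you should state and claim the sharper result explicitly rather than attributing the choice principle to a step that does not use it.
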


The main tool for proving Theorem \ref{folklore} is a \emph{formalized} version of Spector's theorem. To state this formalized result, we first introduce some notation.

\begin{definition}
Let $Rec:=\{e\in\mathbb{N} \mid e \text{ is an index of a total recursive function}\}$. With each $e\in Rec$ there is an associated a relation $\prec_e$ where $$n\prec_e m :\Leftrightarrow \{e\}(\langle n,m\rangle)=0$$ where $\langle, \rangle$ is a primitive recursive pairing function.
\end{definition}

\begin{definition}
Let $\mathfrak{W}_{Rec} : = \{e\in\mathbb{N} \mid e\in Rec \wedge \mathsf{WO}(\prec_e)\}$.
\end{definition}

In \cite{rathjen1999realm} (see Proposition 2.19), Rathjen derives Theorem \ref{folklore} from the following lemma, which is a formalized version of Spector's $\Sigma^1_1$-bounding theorem:

\begin{lemma}[Rathjen]\label{rathjen} Suppose $H(x)$ is a $\Sigma^1_1$ formula such that:
$$\mathsf{ACA}_0 \vdash \forall x \big(H(x) \to x\in \mathfrak{W}_{Rec} \big).$$
Then for some $e\in Rec$:
$$\mathsf{ACA}_0 \vdash e \in \mathfrak{W}_{Rec} \wedge \neg H(e).$$
\end{lemma}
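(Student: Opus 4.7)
The plan is to build, from the hypothesis on $H$, an explicit index $e$ for a recursive linear order $\prec_e$ that $\mathsf{ACA}_0$ proves to be well-founded but $\mathsf{ACA}_0$ also proves lies outside $H$. Writing $H(x) \equiv \exists Y\, \theta(x,Y)$ with $\theta$ arithmetic, I would first form an auxiliary primitive recursive relation that amalgamates all the orderings $\prec_x$ for which $H(x)$ can be witnessed---for instance, a Kleene--Brouwer ordering on the tree of finite $\prec_x$-descending sequences indexed by $\theta$-witnesses. By Kleene's recursion theorem, I would then obtain an index $e$ computing $\prec_e$ in terms of this amalgamation \emph{together with a self-referential clause}: the construction is rigged so that, should $H(e)$ happen to hold, an embedding of $\prec_e$ into one of its own proper initial segments is definable from the witness.

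The verification then splits into two parts, both carried out inside $\mathsf{ACA}_0$. First, using only the assumed implication $\forall x(H(x) \to x \in \mathfrak{W}_{Rec})$, any putative infinite descending sequence through $\prec_e$ would project to an infinite descending sequence inside some $\prec_x$ with $H(x)$, which the hypothesis rules out; hence $\mathsf{ACA}_0 \vdash \mathsf{WO}(\prec_e)$, and since the totality of $\{e\}$ is arithmetic by construction, $\mathsf{ACA}_0 \vdash e \in \mathfrak{W}_{Rec}$. Second, were $H(e)$ to hold, the self-referential clause would furnish a strictly decreasing sequence in $\prec_e$, contradicting the well-foundedness just established; hence $\mathsf{ACA}_0 \vdash \neg H(e)$. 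Combining the two yields the desired conclusion.

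The main obstacle is engineering $\prec_e$ so that both directions of the verification go through simultaneously: the positive side must rely only on the given premise about $H$, while the negative side must extract a genuine self-embedding from the added assumption $H(e)$. Striking this balance is exactly where the recursion theorem enters---as a form of implicit diagonalization on recursive indices---and it is precisely this step that Section \ref{avoiding-diag} is designed to replace by a direct argument about cut-free infinitary derivations, in the spirit of the boundedness analyses of Gentzen and of Beckmann--Pohlers alluded to in the introduction.
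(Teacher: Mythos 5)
Your proposal reconstructs the recursion-theoretic argument that the paper deliberately sets aside: it is essentially the proof the paper attributes to Rathjen, whereas the whole point of Section \ref{avoiding-diag} is to reprove the lemma \emph{without} the recursion theorem, by pushing an $\mathsf{ACA}_0$-proof of $\forall x\big(H(x)\to x\in\mathfrak{W}_{Rec}\big)$ through the embedding into cut-free $\omega$-logic (Theorem \ref{sigma-fact}) and then applying the boundedness lemma (Corollary \ref{formal-bounding}) to conclude that every $\prec^x$ with $H(x)$ has order type $\boldsymbol{<}\,2^\alpha$ for a single $\alpha\boldsymbol{<}\varepsilon_0$ read off from the derivation; the witness $e$ is then simply a notation for $2^\alpha+1$, with no self-reference anywhere. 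So the two routes are genuinely different, and yours forfeits exactly the feature this section is advertising.

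More importantly, as written your construction has a gap at its center, which you yourself flag as ``the main obstacle'' without resolving it. The amalgamated tree you describe---finite $\prec_x$-descending sequences paired with $\theta$-witness attempts---does support the positive half: in $\mathsf{ACA}_0$, an infinite descending sequence through its Kleene--Brouwer ordering yields an infinite branch, hence some $x$ with $H(x)$ and $\prec_x$ ill-founded, contradicting the hypothesis. But the negative half fails for that tree: a witness to $H(e)$ populates only the witness coordinate of the $e$-section, and to extract an infinite branch (hence your descending sequence or self-embedding) you would \emph{also} need an infinite $\prec_e$-descending sequence, which is precisely what the first half rules out. The standard repair is to simplify rather than complicate: put $H(x)$ in normal form $\exists Y\,\forall n\,R\big(x,\bar{Y}(n)\big)$, let $T_x$ be the tree of finite attempts at a witness $Y$ alone (no descending-sequence coordinate), and use the recursion theorem to choose $e$ with $\prec_e$ the Kleene--Brouwer ordering of $T_e$. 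Then $\mathsf{ACA}_0$ proves that $H(e)$ implies $T_e$ has a branch, hence $\prec_e$ is ill-founded, which together with the hypothesis gives $\neg H(e)$; and $\neg H(e)$ implies $T_e$ is well-founded, hence its Kleene--Brouwer ordering is a well-order (this last implication is where arithmetic comprehension is genuinely used), giving $e\in\mathfrak{W}_{Rec}$. If you intend to keep the recursion-theorem route you should carry out this version explicitly, and verify that the $\Sigma^1_1$ normal form and the equivalence between well-foundedness of a tree and of its Kleene--Brouwer ordering are available in $\mathsf{ACA}_0$; but be aware that this then proves a different point than the one the paper is making here.
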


Theorem \ref{folklore} is straightforwardly derived from Lemma \ref{rathjen}. Rathjen provides a proof of Lemma 1.1 in \cite{rathjen1991role}. Note that this proof of Lemma \ref{rathjen} makes use of the recursion theorem. In Section \ref{avoiding-diag} we will present an alternative proof of Lemma \ref{rathjen} that does not make any use of the recursion theorem or other diagonalization.

%Is it possible to derive Lemma \ref{rathjen} by appealing to a formalization of the Beckmann--Pohlers proof rather than using the recursion theorem? Here we face some challenges. The standard definitions of the infinitary proof system that Beckmann and Pohlers use requires the resources of the theory $\mathsf{ATR}_0$. It is possible to give more effective presentations of fragments of $\omega$-logic, as in \cite{pakhomov2021reflection} \textsection 2.5; however, these fragments are insufficient for proving Spector's theorem as Bekmann and Pohlers do. Thus, we suspect that it is possible to prove an analogue of Theorem \ref{main} concerning extensions of $\mathsf{ATR}_0$ along these lines. However, we do not see how to prove Theorem \ref{main} as it stands along these lines.

\subsection{Remarks}

Before continuing, let's highlight some features of these folklore results and their relationship to the main theorems.

First, note the role that $\Sigma^1_1$-bounding plays in the proofs of the folklore results. We will not mention $\Sigma^1_1$-bounding explicitly in the proofs of the main theorems, but we will still rely on it insofar as it is used to prove these folklore results. We feel that the role of $\Sigma^1_1$-bounding is so important that it is worth explicitly highlighting where it is being used. 

Second, note that in the proofs of both folklore results, we must appeal to the $\Pi^1_1$-soundness of $T$. We will not appeal to $\Pi^1_1$-soundness explicitly in the proofs of the main theorems; we will only rely on it insofar as we invoke these folklore results.

%Third, note once more that the proof of Theorem \ref{folklore} appeals to the recursion theorem. This is worth highlighting, since this is the closest our proofs of Theorem \ref{main} and Theorem \ref{stronger} come to using self-reference.

One can find proofs of Theorem \ref{bounding} and Theorem \ref{folklore} in \cite{rathjen1999realm}.

%%%%%%%%%%%%%%%%%%%%%%%%%%%%%%%%%
%%%%%%%%%%%%%%%%%%%%%%%%%%%%%%%%%
%%%%%%%%%%%%%%%%%%%%%%%%%%%%%%%%%

\section{The Main Theorems}\label{main-section}

In this section we prove our main theorem, an analogue of G\"{o}del's second incompleteness theorem. We start by introducing two formulas and make a remark about their syntactic complexity. We will use these formulas and appeal to the remark many times, so it is worth isolating them here.

\begin{definition}\label{linear}
For a binary formula $\triangleleft$, let $\mathsf{LO}(\triangleleft)$ stand for the conjunction of the following clauses:
\begin{enumerate}
    \item $\neg \exists x  \mathsf{True}_{\Sigma^0_1}(x\triangleleft x)$
    \item $\forall x \forall y \big( \mathsf{True}_{\Sigma^0_1}(x\triangleleft y) \vee \mathsf{True}_{\Sigma^0_1}(y\triangleleft x) \vee x=y \big)$
    \item $\forall x \forall y \forall z \Big( \big( \mathsf{True}_{\Sigma^0_1}(x\triangleleft y) \wedge \mathsf{True}_{\Sigma^0_1}(y\triangleleft z) \big) \to \mathsf{True}_{\Sigma^0_1}(x\triangleleft z) \Big)$
\end{enumerate}
\end{definition}

\begin{definition}\label{well-founded}
For a binary formula $\triangleleft$, let $\mathsf{WF}(\triangleleft)$ stand for:
$$\forall X\big(\exists x\in X \to \exists x\in X \; \forall y \in X \; \neg \mathsf{True}_{\Sigma^0_1}(y \triangleleft x) \big)$$
\end{definition}

\begin{remark}
Note the use of the $\Sigma^0_1$ truth-predicate in Definition \ref{linear} and Definition \ref{well-founded}. Thus, for any formula $\triangleleft$, $\mathsf{LO}(\triangleleft)$ is arithmetic and $\mathsf{WF}(\triangleleft)$ is $\Pi^1_1$. Of course, $\mathsf{LO}(\triangleleft)$ and $\mathsf{WF}(\triangleleft)$ will make the most sense when applied to $\Sigma^0_1$ formulas or in quantified statements about $\Sigma^0_1$ formulas. We shall use it in the latter way.
\end{remark}

\subsection{The key lemma}

The key to the proofs of Theorem \ref{main} and Theorem \ref{stronger} is the following lemma:

\begin{lemma}\label{key-lemma}
If $T$ is $\Pi^1_1$-sound and $\Sigma^1_1$-definable, then there is a $\Sigma^1_1$ presentation $\prec_T$ of $|T|_{\mathsf{RE}}$ such that $\Sigma^1_1\text{-}\mathsf{AC}_0\vdash \mathsf{RFN}_{\Pi^1_1}(T) \to \mathsf{WF}(\prec_T).$
\end{lemma}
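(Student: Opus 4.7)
My plan is to fix an enumeration $(\triangleleft_e)_{e \in \omega}$ of the $\Sigma^0_1$ binary formulas, let
\[
A := \{e \in \omega : T \vdash \mathsf{LO}(\triangleleft_e) \wedge \mathsf{WF}(\triangleleft_e)\},
\]
and build $\prec_T$ as an ordered sum of the family $(\triangleleft_e)_{e \in A}$. Since $T$ is $\Sigma^1_1$-definable, $\mathsf{Pr}_T$ is $\Sigma^1_1$, so $A$ is $\Sigma^1_1$; and by $\Pi^1_1$-soundness each $\triangleleft_e$ with $e \in A$ is an honest well-ordering of some order-type $\alpha_e$, with $|T|_{\mathsf{RE}} = \sup_{e \in A} \alpha_e$ by definition.

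For the construction itself, I would take the lexicographic sum: let the field be $\{(e,n) : e \in A,\ n \in \mathsf{field}(\triangleleft_e)\}$, and set $(e_1,n_1) \prec_T (e_2,n_2)$ iff $e_1 < e_2$, or $e_1 = e_2$ and $n_1 \triangleleft_{e_1} n_2$. This formula is a conjunction of a $\Sigma^1_1$ membership condition ($e_i \in A$) with arithmetic clauses, hence $\Sigma^1_1$. For the order-type calculation I would argue that $\{\alpha_e : e \in A\}$ is closed under ordinal addition, because any two $\Sigma^0_1$ well-orderings that $T$ proves well-founded can be $T$-provably concatenated; since $|T|_{\mathsf{RE}}$ is a strict supremum of this set (a padding-by-one argument shows it is not itself achieved), $|T|_{\mathsf{RE}}$ must be additively principal, i.e., of the form $\omega^{\gamma}$. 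Consequently every finite partial sum $\sum_{e \in A,\ e \le k} \alpha_e$ lies strictly below $|T|_{\mathsf{RE}}$ while being cofinal in it, and so the full $\omega$-length lex sum has order-type exactly $|T|_{\mathsf{RE}}$.

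For the implication itself, I would argue inside $\Sigma^1_1\text{-}\mathsf{AC}_0$: assuming $\mathsf{RFN}_{\Pi^1_1}(T)$, let $X \subseteq \mathsf{field}(\prec_T)$ be nonempty. The projection of $X$ onto the first coordinate is arithmetic in $X$, hence exists, is a nonempty subset of $A$, and has a least element $e_0$. The fiber $\{n : (e_0,n) \in X\}$ is a nonempty subset of $\mathsf{field}(\triangleleft_{e_0})$. Because $e_0 \in A$, $T$ proves the $\Pi^1_1$ sentence $\mathsf{WF}(\triangleleft_{e_0})$, which by $\mathsf{RFN}_{\Pi^1_1}(T)$ is true, so the fiber has a $\triangleleft_{e_0}$-minimum $n_0$; then $(e_0,n_0)$ is the $\prec_T$-minimum of $X$. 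The use of $\Sigma^1_1$-choice here is mild: it is needed to reflect the $\Sigma^1_1$ proof predicate $\mathsf{Pr}_T$ cleanly and to form the arithmetic-in-$X$ projection uniformly.

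The main obstacle I expect is the order-type calculation for very weak $T$: closure of $\{\alpha_e : e \in A\}$ under concatenation needs $T$ to prove a schema of the form ``$\mathsf{WF}(\triangleleft_{e_1}) \wedge \mathsf{WF}(\triangleleft_{e_2}) \to \mathsf{WF}(\triangleleft_{e_1} + \triangleleft_{e_2})$,'' which is automatic for $T \supseteq \mathsf{ACA}_0$ but could fail for truly feeble $T$. The fallback is a telescoping ``tail'' construction that keeps, for each $e \in A$, only the final segment of $\triangleleft_e$ above $\sup\{\alpha_{e'} : e' < e,\ e' \in A\}$; this telescopes unconditionally to $|T|_{\mathsf{RE}}$, but its membership condition has a naive $\Sigma^1_1 \wedge \Pi^1_1$ form, so staying inside $\Sigma^1_1$ requires rewriting the tail condition using $\Sigma^1_1$-choice within $\Sigma^1_1\text{-}\mathsf{AC}_0$ to collapse the universal quantifier over prior indices into a single existential witness.
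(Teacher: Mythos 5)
Your construction is genuinely different from the paper's. The paper first invokes Spector's bounding theorem (Theorem \ref{bounding}) to obtain a single $\Sigma^0_1$ presentation $\prec_\star$ of \emph{exactly} $|T|_{\mathsf{RE}}$, and then defines $\prec_T$ by \emph{thinning} $\prec_\star$: $\alpha\prec_T\beta$ holds only when, in addition to $\alpha\prec_\star\beta$, the initial segment $\prec_\star\restriction\beta$ embeds into some $\Sigma^0_1$ linear order that $T$ provably well-founds. You instead assemble $\prec_T$ from below, as the $\omega$-indexed sum of all the $T$-provably well-founded $\Sigma^0_1$ orderings, with the $\Sigma^1_1$ index set $A$ built into the field. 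Your reflection argument is, if anything, cleaner than the paper's: you produce an actual $\prec_T$-minimal element of $X$ by minimizing the first coordinate and then applying $\mathsf{RFN}_{\Pi^1_1}(T)$ to the single block $\triangleleft_{e_0}$, whereas the paper routes through an infinite descending sequence. (One small repair there: the set you minimize over must be the projection of $X\cap\mathsf{field}(\prec_T)$, which is only $\Sigma^1_1$ in $X$; first split on whether $X$ meets the complement of the field, any such point being vacuously $\prec_T$-minimal, and only then project.)

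The genuine soft spot is the order-type computation, and you have correctly located it yourself. The lemma as stated assumes only that $T$ is $\Pi^1_1$-sound and $\Sigma^1_1$-definable---there is no hypothesis that $T$ extends any base theory---so you are not entitled to the schema ``$\mathsf{WF}(\triangleleft_{e_1})\wedge\mathsf{WF}(\triangleleft_{e_2})\to\mathsf{WF}(\triangleleft_{e_1}+\triangleleft_{e_2})$'' inside $T$, nor to the padding step showing the supremum is not attained. For a feeble $T$ proving, say, only two well-foundedness statements each of order type $\omega$, your sum has order type $\omega\cdot 2$ while $|T|_{\mathsf{RE}}=\omega$, so the construction overshoots and the proof of the lemma \emph{as stated} is incomplete; the telescoping-tail fallback you sketch does not obviously land in $\Sigma^1_1$ either. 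This is precisely what the paper's top-down construction buys: since $\prec_T$ is a subordering of a fixed presentation $\prec_\star$ of $|T|_{\mathsf{RE}}$, the upper bound $\mathsf{otyp}(\prec_T)\leq|T|_{\mathsf{RE}}$ is automatic and requires nothing of $T$. A second, smaller issue: by putting $T\vdash\mathsf{LO}(\triangleleft_e)$ into the definition of $A$ you may \emph{undershoot}, since $|T|_{\mathsf{RE}}$ is defined via presentations that really are linear orders whether or not $T$ proves so; replace the provability requirement by the true arithmetic statement $\mathsf{LO}(\triangleleft_e)$ (still $\Sigma^1_1$) to recover $\sup_{e\in A}\alpha_e=|T|_{\mathsf{RE}}$. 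With those two repairs---or simply by restricting attention to $T\supseteq\Sigma^1_1\text{-}\mathsf{AC}_0$, which is all the main theorems need---your route goes through.
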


\begin{proof}
Let $T$ be $\Pi^1_1$-sound and $\Sigma^1_1$-definable. By Theorem \ref{bounding}, $|T|_{\mathsf{RE}}<\omega_1^{\mathsf{CK}}$, whence there is some $\Sigma^0_1$-definable $\prec_\star$ such that $|T|_{\mathsf{RE}}=\mathsf{otyp}(\prec_\star)$.

We are now going to define an alternate presentation $\prec_T$ of $|T|_{\mathsf{RE}}$. Informally, the formula $\alpha\prec_T \beta$ says that $\alpha$ is less than $\beta$ in an initial segment of the $\prec_\star$ ordering that embeds into a $\Sigma^0_1$-definable linear order $\triangleleft$ such that $T$ proves the well-foundedness of $\triangleleft$. More formally, we define $\alpha \prec_T \beta $ as the conjunction of:
\begin{enumerate}
    \item $\alpha\prec_\star\beta$
    \item $\exists \triangleleft \in \Sigma^0_1 \; \exists f  \; \Big( \mathsf{Emb}(f,\prec_\star\restriction \beta, \triangleleft) \; \wedge \mathsf{LO}(\triangleleft) \; \wedge \; \mathsf{Pr}_T\big(\mathsf{WF}(\triangleleft)\big) \Big).$
\end{enumerate}
where $\mathsf{Emb}(f,\prec_\star\restriction \beta,\triangleleft)$ stands for:
$$\forall x \forall y \Big( ( y\preceq_\star \beta \wedge x\prec_\star y) \to \mathsf{True}_{\Sigma^0_1}\big(f(x)\triangleleft f(y) \big) \Big)$$
and where $\mathsf{LO}(\triangleleft)$ and $\mathsf{WF}(\triangleleft)$ are as in Definition \ref{linear} and Definition \ref{well-founded}.

\begin{claim}
$\prec_T$ is $\Sigma^1_1\text{-}\mathsf{AC}_0$-provably equivalent to a $\Sigma^1_1$ formula.
\end{claim}

Clearly $\alpha \prec_\star \beta$ is $\Sigma^0_1$. Now let's look at the second conjunct of $\alpha\prec_T\beta$. Note that $\mathsf{Emb}(f,\prec_\star\restriction \beta,\triangleleft)$ and $\mathsf{LO}(\triangleleft)$ are both arithmetic. On the other hand, $\mathsf{Pr}_T\big( \mathsf{WF}(\triangleleft)\big)$ is $\Sigma^1_1$, since $T$ is $\Sigma^1_1$-definable. So the conjunction:
$$ \mathsf{Emb}(f,\prec_\star\restriction \beta, \triangleleft) \; \wedge \mathsf{LO}(\triangleleft) \; \wedge \; \mathsf{Pr}_T\big(\mathsf{WF}(\triangleleft)\big) $$
is provably equivalent in $\Sigma^1_1\text{-}\mathsf{AC}_0$ to a $\Sigma^1_1$ formula. Thus, the second conjunct of  $\alpha\prec_T\beta$ is given by an existential number quantifier before an existential set quantifier before a (formula that is $\Sigma^1_1\text{-}\mathsf{AC}_0$-provably equivalent to a) $\Sigma^1_1$ formula. It follows that $\prec_T$ is $\Sigma^1_1\text{-}\mathsf{AC}_0$-provably equivalent to a $\Sigma^1_1$ formula.

\begin{claim}
$\prec_T$ is a presentation of $|T|_{\mathsf{RE}}$.
\end{claim}

$\mathsf{otyp}(\prec_T) \leq |T|_{\mathsf{RE}}$: The first conjunct in the definition of $\prec_T$ ensures that $\mathsf{otyp}(\prec_T)\leq \mathsf{otyp}(\prec_\star)$. To finish the argument, recall that $\mathsf{otyp}(\prec_\star)=|T|_{\mathsf{RE}}$.

$\mathsf{otyp}(\prec_T) \geq |T|_{\mathsf{RE}}$: Let $\alpha<|T|_{\mathsf{RE}}=\mathsf{otyp}(\prec_\star)$. We need to see that $\alpha<\mathsf{otyp}(\prec_T)$.

Since $\alpha<\mathsf{otyp}(\prec_\star)$ and $\alpha<|T|_{\mathsf{RE}}$, there is an embedding of an initial segment of $\prec_\star$ that includes the $\prec_\star$ representation of $\alpha$ into a $\Sigma^0_1$-definable well-order that is $T$-provably well-founded. It is then immediate from the definition of $\prec_T$ that $\alpha<\mathsf{otyp}(\prec_T)$.

\begin{claim}
$\Sigma^1_1\text{-}\mathsf{AC}_0\vdash \mathsf{RFN}_{\Pi^1_1}(T) \to \mathsf{WF}(\prec_T).$
\end{claim}

Reason in $\Sigma^1_1\text{-}\mathsf{AC}_0$: Suppose that $\prec_T$ is ill-founded. Then, by the definition of $\prec_T$, there is some infinite descending sequence in $\prec_\star$ that embeds into a $\Sigma^0_1$-definable linear order $\triangleleft$ such that $T\vdash \mathsf{WF}(\triangleleft)$. Since $\triangleleft$ embeds an ill-founded linear order, $\triangleleft$ is ill-founded. So $T$ proves a false $\Pi^1_1$ sentence, namely, $\mathsf{WF}(\triangleleft)$.
\end{proof}

\subsection{An incompleteness theorem}

We now present a proof of Theorem \ref{main}, restated here:

\begin{theorem}\label{main-new}
If $T$ is a $\Pi^1_1$-sound and $\Sigma^1_1$-definable extension of $\Sigma^1_1\text{-}\mathsf{AC}_0$, then $T$ does not prove its own $\Pi^1_1$-soundness.
\end{theorem}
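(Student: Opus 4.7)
The plan is a proof by contradiction that simply chains together the two ingredients we have assembled, namely Lemma \ref{key-lemma} and the folklore identification $|T|_{\mathsf{RE}}=|T|_{\mathsf{AN}}$ from Theorem \ref{folklore}. So, suppose toward a contradiction that $T$ is a $\Pi^1_1$-sound, $\Sigma^1_1$-definable extension of $\Sigma^1_1\text{-}\mathsf{AC}_0$ and that $T\vdash \mathsf{RFN}_{\Pi^1_1}(T)$. By Lemma \ref{key-lemma}, we obtain a $\Sigma^1_1$ presentation $\prec_T$ of $|T|_{\mathsf{RE}}$ such that $\Sigma^1_1\text{-}\mathsf{AC}_0 \vdash \mathsf{RFN}_{\Pi^1_1}(T) \to \mathsf{WF}(\prec_T)$. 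Since $T$ extends $\Sigma^1_1\text{-}\mathsf{AC}_0$, the same implication is provable in $T$; combining this with the assumption on $T$ via modus ponens yields $T\vdash \mathsf{WF}(\prec_T)$.

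The next step is to use the folklore result to transfer this from the recursive world to the analytic world. Because $T$ is a $\Pi^1_1$-sound extension of $\Sigma^1_1\text{-}\mathsf{AC}_0$, Theorem \ref{folklore} gives $|T|_{\mathsf{RE}} = |T|_{\mathsf{AN}}$. Hence $\prec_T$ is a $\Sigma^1_1$ presentation of the ordinal $|T|_{\mathsf{AN}}$ itself, and $T$ proves it well-founded. The final step is to observe that this is impossible: the supremum in the definition of $|T|_{\mathsf{AN}}$ can never be attained. Indeed, if $\prec_T$ were such a presentation, then appending a new maximum element produces a $\Sigma^1_1$ presentation of $|T|_{\mathsf{AN}}+1$ which $T$ still proves well-founded, so $|T|_{\mathsf{AN}}+1 \leq |T|_{\mathsf{AN}}$, a contradiction.

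Almost everything on this list is bookkeeping once Lemma \ref{key-lemma} is in hand, so I do not anticipate a real obstacle at the level of Theorem \ref{main-new} itself. The only subtlety worth flagging is the use of the hypothesis that $T$ extends $\Sigma^1_1\text{-}\mathsf{AC}_0$: it is used twice, once to license the modus ponens step (so that the implication produced by Lemma \ref{key-lemma} is available inside $T$) and once to justify the appeal to Theorem \ref{folklore}. The real work has already been done, first in the construction of $\prec_T$ in Lemma \ref{key-lemma} (where $\Sigma^1_1$-bounding enters through Theorem \ref{bounding} to guarantee that $|T|_{\mathsf{RE}}$ has a $\Sigma^0_1$ presentation $\prec_\star$ to build on), and second in Theorem \ref{folklore} (where $\Sigma^1_1$-bounding reappears in its formalized incarnation, Lemma \ref{rathjen}). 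Thus the hard parts are quarantined in the preliminaries, and the proof of Theorem \ref{main-new} amounts to stringing them together.
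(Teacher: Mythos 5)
Your proposal is correct and follows essentially the same route as the paper: apply Lemma \ref{key-lemma} to get $\prec_T$, use the extension of $\Sigma^1_1\text{-}\mathsf{AC}_0$ to import the implication into $T$, and invoke Theorem \ref{folklore} to see that $\prec_T$ presents $|T|_{\mathsf{AN}}$ and hence cannot be $T$-provably well-founded. The only cosmetic differences are that you argue by contradiction where the paper argues contrapositively, and that you make explicit (via the append-a-maximum-element observation) why the supremum defining $|T|_{\mathsf{AN}}$ is never attained, a point the paper treats as immediate from the definition.
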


\begin{proof}
By Lemma \ref{key-lemma}, there is a $\Sigma^1_1$ presentation $\prec_T$ of $|T|_{\mathsf{RE}}$ such that:
$$\Sigma^1_1\text{-}\mathsf{AC}_0\vdash \mathsf{RFN}_{\Pi^1_1}(T) \to \mathsf{WF}(\prec_T).$$

Since $T$ extends $\Sigma^1_1\text{-}\mathsf{AC}_0$, we infer that:
\begin{equation}\label{implies-formula}
    T\vdash \mathsf{RFN}_{\Pi^1_1}(T) \to \mathsf{WF}(\prec_T).
\end{equation}

\begin{claim}
$T\nvdash \mathsf{WF}(\prec_T)$.
\end{claim}

Since $T$ extends $\Sigma^1_1\text{-}\mathsf{AC}_0$, by Theorem \ref{folklore}, $|T|_{\mathsf{RE}}=|T|_{\mathsf{AN}}$. Moreover, since $T$ extends $\Sigma^1_1\text{-}\mathsf{AC}_0$, we infer that $\prec_T$ is $T$-provably equivalent to a $\Sigma^1_1$ formula. So $\prec_T$ is a presentation of $|T|_{\mathsf{AN}}$ that is $T$-provably equivalent to a $\Sigma^1_1$ formula, whence $T\nvdash \mathsf{WF}(\prec_T)$.

It follows immediately from (\ref{implies-formula}) and from the claim that $T\nvdash \mathsf{RFN}_{\Pi^1_1}(T)$.
\end{proof}

\subsection{Well-foundedness}\label{foundation}

In this subsection we prove a strengthening of Theorem \ref{main-new} that is of independent interest. The following result is proved in \cite{pakhomov2018reflection, pakhomov2021reflection}:

\begin{theorem}[Pakhomov--W.]\label{old-pw}
There is no sequence $(T_n)_{n<\omega}$ of $\Pi^1_1$-sound and $\Sigma^0_1$-definable extensions of $\mathsf{ACA}_0$ such that for each $n$, $T_n\vdash \mathsf{RFN}_{\Pi^1_1}(T_{n+1})$.
\end{theorem}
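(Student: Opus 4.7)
The natural route to Theorem \ref{old-pw} from the machinery of this paper is to derive, from the hypothesized sequence, a strictly descending chain of proof-theoretic ordinals $|T_0|_{\mathsf{RE}} > |T_1|_{\mathsf{RE}} > \cdots$, which is impossible. This yields a proof via ordinal analysis, bypassing both G\"{o}del's second incompleteness theorem and self-reference --- the route actually taken in the cited Pakhomov--Walsh papers goes via the former.

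The core engine is Lemma \ref{key-lemma}, but it has to be refined for the $\Sigma^0_1$-definable setting so that it produces an arithmetic (equivalently, $\mathsf{ACA}_0$-provably $\Sigma^0_1$) presentation of $|T|_{\mathsf{RE}}$ rather than merely a $\Sigma^1_1$ one. Because $T$ is $\Sigma^0_1$-definable, $\mathsf{Pr}_T$ is already $\Sigma^0_1$, so the only $\Sigma^1_1$ ingredient left in the definition of $\prec_T$ is the existential set quantifier $\exists f$ over the embedding. I would replace it by an existential number quantifier $\exists e$ ranging over indices of total recursive functions, leveraging the classical fact that between recursive well-orders with $\mathsf{otyp}(\prec_1) \leq \mathsf{otyp}(\prec_2)$ an order-preserving embedding can be chosen recursively. $\Pi^1_1$-soundness of $T$ guarantees that any $\triangleleft$ witnessing the second conjunct is genuinely well-founded, so that fact applies.

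Granted this refined key lemma, the argument chains just as in Theorem \ref{main-new}. For each $n$, the refined lemma supplies a $\Sigma^0_1$ presentation $\prec_{T_{n+1}}$ of $|T_{n+1}|_{\mathsf{RE}}$ with $\mathsf{ACA}_0 \vdash \mathsf{RFN}_{\Pi^1_1}(T_{n+1}) \to \mathsf{WF}(\prec_{T_{n+1}})$. Since $T_n \supseteq \mathsf{ACA}_0$ and $T_n \vdash \mathsf{RFN}_{\Pi^1_1}(T_{n+1})$, we get $T_n \vdash \mathsf{WF}(\prec_{T_{n+1}})$. Appending a new maximum element produces a $\Sigma^0_1$ order of order type $|T_{n+1}|_{\mathsf{RE}} + 1$ that $T_n$ still proves well-founded, so $|T_n|_{\mathsf{RE}} > |T_{n+1}|_{\mathsf{RE}}$. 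Iterating yields an infinite descending sequence of ordinals --- contradiction.

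The main obstacle is the refinement of the key lemma: verifying that the lower-bound direction $\mathsf{otyp}(\prec_T) \geq |T|_{\mathsf{RE}}$ survives when the embedding quantifier is restricted to recursive indices, and that the implication from $\mathsf{RFN}_{\Pi^1_1}(T)$ to $\mathsf{WF}(\prec_T)$ can be carried out in $\mathsf{ACA}_0$ without recourse to $\Sigma^1_1\text{-}\mathsf{AC}_0$. This hinges on the recursive-embedding fact cited above --- standard hyperarithmetic theory, but one whose $\mathsf{ACA}_0$-formalization needs to be checked. Everything downstream is a mechanical transposition of the proofs of Lemma \ref{key-lemma} and Theorem \ref{main-new}, with the finite descent on $\omega$ replacing the single inequality.
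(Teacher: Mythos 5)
First, be aware that the paper does not prove Theorem \ref{old-pw} at all: it is quoted from \cite{pakhomov2018reflection, pakhomov2021reflection}, where it is obtained from G\"{o}del's second incompleteness theorem by showing that $\mathsf{ACA}_0$ plus the negation of the theorem proves its own consistency. The ordinal-descent argument you propose is exactly the method the paper uses for Theorem \ref{new-pw}, and the paper is explicit that this method yields only the \emph{incomparable} variant for $\Sigma^1_1$-definable extensions of $\Sigma^1_1\text{-}\mathsf{AC}_0$, not the original statement; the discussion around \cite{lutz2020incompleteness} in \textsection \ref{foundation} flags precisely this mismatch. So you are not reconstructing the paper's proof but attempting a genuinely new one, and the entire burden falls on your ``refined key lemma.''

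That refinement has a real gap. The fact you invoke --- that an order-preserving embedding between recursive well-orders $\prec_1,\prec_2$ with $\mathsf{otyp}(\prec_1)\leq\mathsf{otyp}(\prec_2)$ can always be chosen recursive --- is false. Take $\prec_2$ to be the standard order on $\mathbb{N}$ and build a recursive linear order $\prec_1$ of type $\omega$ by a marker construction in which, for each $e$, a designated witness $x_e$ receives $\{e\}(x_e)+1$ fresh predecessors once $\{e\}(x_e)$ converges; the rank function of $\prec_1$ then escapes every total recursive function, while any embedding $f$ of $\prec_1$ into $(\mathbb{N},<)$ must satisfy $f(x)\geq|x|_{\prec_1}$, so no recursive embedding exists. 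Consequently the lower-bound direction $\mathsf{otyp}(\prec_T)\geq|T|_{\mathsf{RE}}$ fails for your modified $\prec_T$, and the descent collapses. The true embedding theorems in this vicinity (e.g., that every recursive well-order maps order-preservingly and recursively into $<_{\mathcal{O}}$ below a notation) are proved by effective transfinite recursion via the recursion theorem --- reintroducing exactly the diagonalization this line of argument is meant to avoid --- and give no control over the target ordering being $T$-provably well-founded. There is also a smaller slip: even with a number $e$ in place of the set quantifier, totality of $\{e\}$ and the embedding condition are $\Pi^0_2$, so your $\prec_T$ is arithmetic rather than ``$\mathsf{ACA}_0$-provably $\Sigma^0_1$,'' and an additional folklore step (that $|T|_{\mathsf{RE}}$ agrees with the supremum over arithmetic presentations) would be needed to close the descent. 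The strategy is attractive, but whether the ordinal-analytic proof can be pushed down to definable extensions of $\mathsf{ACA}_0$ is essentially what the paper leaves open (cf.\ Question \ref{weaken-theory}); the step you describe as mechanical is the obstruction.
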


Pakhomov and the author proved Theorem \ref{old-pw} to provide an explanation of the apparent pre-well-ordering of natural theories by proof-theoretic strength; see \cite{walsh2021hierarchy} for a discussion of this phenomenon. In \cite{pakhomov2018reflection, pakhomov2021reflection}, Theorem \ref{old-pw} is proved using G\"{o}del's second incompleteness theorem. In particular, we show that the theory $\mathsf{ACA}_0+\varphi$, where $\varphi$ states that Theorem \ref{old-pw} is false, proves its own consistency. In \cite{lutz2020incompleteness} it is claimed that such a result ``could be proved by showing that a descending sequence $(T_n)_{n<\omega}$ of theories would induce a descending sequence in the ordinals (namely, the associated sequence of
proof-theoretic ordinals).'' We now present such a proof (though for $\Sigma^1_1$-definable extensions of $\Sigma^1_1\text{-}\mathsf{AC}_0$ rather than for $\Sigma^0_1$-definable extensions of $\mathsf{ACA}_0$). 

What follows is a restatement of Theorem \ref{stronger}:

\begin{theorem}\label{new-pw}
There is no sequence $(T_n)_{n<\omega}$ of $\Pi^1_1$-sound and $\Sigma^1_1$-definable extensions of $\Sigma^1_1\text{-}\mathsf{AC}_0$ such that for each $n$, $T_n \vdash \mathsf{RFN}_{\Pi^1_1}(T_{n+1})$.
\end{theorem}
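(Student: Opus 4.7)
The plan is to iterate the machinery of Lemma \ref{key-lemma} along the sequence and conclude that the proof-theoretic ordinals $|T_n|_{\mathsf{RE}}$ form a strictly descending sequence in the ordinals, which is impossible. Suppose for contradiction that such a sequence $(T_n)_{n<\omega}$ exists. Applying Lemma \ref{key-lemma} to each $T_{n+1}$ produces, for every $n$, a $\Sigma^1_1$ presentation $\prec_{T_{n+1}}$ of $|T_{n+1}|_{\mathsf{RE}}$ such that
\[
\Sigma^1_1\text{-}\mathsf{AC}_0 \vdash \mathsf{RFN}_{\Pi^1_1}(T_{n+1}) \to \mathsf{WF}(\prec_{T_{n+1}}).
\]
Since $T_n$ extends $\Sigma^1_1\text{-}\mathsf{AC}_0$ and $T_n \vdash \mathsf{RFN}_{\Pi^1_1}(T_{n+1})$ by hypothesis, we obtain $T_n \vdash \mathsf{WF}(\prec_{T_{n+1}})$.

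The core of the argument is to upgrade this to strict ordinal descent. Form the $\Sigma^1_1$ presentation $\prec_{T_{n+1}} + 1$ obtained by appending a new top element; it has order-type $|T_{n+1}|_{\mathsf{RE}}+1$, and $T_n$ proves its well-foundedness (appending a top element is $\Sigma^1_1\text{-}\mathsf{AC}_0$-provably well-foundedness-preserving, and $T_n \supseteq \Sigma^1_1\text{-}\mathsf{AC}_0$). Because $\prec_{T_{n+1}}$ is $T_n$-provably equivalent to a genuine $\Sigma^1_1$ formula (as $T_n$ extends $\Sigma^1_1\text{-}\mathsf{AC}_0$), so is $\prec_{T_{n+1}}+1$. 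By definition of $|T_n|_{\mathsf{AN}}$ as a supremum, this forces
\[
|T_{n+1}|_{\mathsf{RE}} + 1 \leq |T_n|_{\mathsf{AN}}.
\]
Now I would invoke Theorem \ref{folklore} (applicable since $T_n$ is a $\Pi^1_1$-sound extension of $\Sigma^1_1\text{-}\mathsf{AC}_0$) to replace $|T_n|_{\mathsf{AN}}$ with $|T_n|_{\mathsf{RE}}$, yielding the strict inequality $|T_{n+1}|_{\mathsf{RE}} < |T_n|_{\mathsf{RE}}$.

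Doing this uniformly for all $n$ produces an infinite strictly descending sequence
\[
|T_0|_{\mathsf{RE}} > |T_1|_{\mathsf{RE}} > |T_2|_{\mathsf{RE}} > \cdots
\]
in the ordinals, which is impossible, delivering the desired contradiction. The main conceptual step is the strict-descent upgrade in the middle paragraph: the inequality $|T_{n+1}|_{\mathsf{RE}} \leq |T_n|_{\mathsf{AN}}$ is immediate from the supremum definition, but getting strictness requires the extra trick of adding a top element and exploiting the fact that $|T_n|_{\mathsf{AN}}$ is defined as a supremum that cannot be achieved by any $T_n$-provably well-founded $\Sigma^1_1$ presentation. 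Everything else is a straightforward application of Lemma \ref{key-lemma} and Theorem \ref{folklore}, and no new appeal to $\Sigma^1_1$-bounding beyond what was already absorbed into those results is needed.
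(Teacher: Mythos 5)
Your proof is correct and follows essentially the same route as the paper: apply Lemma \ref{key-lemma} to each $T_{n+1}$, use $T_n \vdash \mathsf{RFN}_{\Pi^1_1}(T_{n+1})$ to get $T_n \vdash \mathsf{WF}(\prec_{T_{n+1}})$, invoke Theorem \ref{folklore} to identify $|T_n|_{\mathsf{RE}}$ with $|T_n|_{\mathsf{AN}}$, and conclude with an impossible strictly descending sequence of ordinals. Your explicit ``append a top element'' argument for strictness is a sound way to justify the step the paper states without elaboration (that a $T_n$-provably well-founded $\Sigma^1_1$ presentation of $|T_{n+1}|_{\mathsf{AN}}$ forces $|T_{n+1}|_{\mathsf{AN}} < |T_n|_{\mathsf{AN}}$, since the supremum defining $|T_n|_{\mathsf{AN}}$ cannot be attained).
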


\begin{proof}
Suppose that there is such a sequence $(T_n)_{n<\omega}$. From Lemma \ref{key-lemma} we infer that, for each $n$, there is a $\Sigma^1_1$ presentation $\prec_{T_n}$ of $|T_{n}|_{\mathsf{RE}}$ such that: $$\Sigma^1_1\text{-}\mathsf{AC}_0\vdash \mathsf{RFN}_{\Pi^1_1}(T_n)\to \mathsf{WF}(\prec_{T_n}).$$
Since each $T_n$ extends $\Sigma^1_1\text{-}\mathsf{AC}_0$, Theorem \ref{folklore} entails that, for each $n$, there is a $\Sigma^1_1$ presentation $\prec_{T_n}$ of $|T_{n}|_{\mathsf{AN}}$ such that: $$\Sigma^1_1\text{-}\mathsf{AC}_0\vdash \mathsf{RFN}_{\Pi^1_1}(T_n)\to \mathsf{WF}(\prec_{T_n}).$$
Since each $T_n$ extends $\Sigma^1_1\text{-}\mathsf{AC}_0$, for each $n$:
$$T_n\vdash \mathsf{RFN}_{\Pi^1_1}(T_{n+1})\to \mathsf{WF}(\prec_{T_{n+1}}).$$
By assumption, for each $n$, $T_n\vdash\mathsf{RFN}_{\Pi^1_1}(T_{n+1})$, so we infer that, for each $n$:
$$T_n\vdash \mathsf{WF}(\prec_{T_{n+1}}).$$
Whence $|T_n|_{\mathsf{AN}}>|T_{n+1}|_{\mathsf{AN}}$ for each $n$. Yet $|T_n|_{\mathsf{AN}}$ is an ordinal for each $n$. So $(|T_n|_{\mathsf{AN}})_{n<\omega}$ is a descending sequence in the ordinals.
\end{proof}

%Yet by Theorem \ref{bounding}, $|T_n|_{\mathsf{AN}}$ is an ordinal (indeed, an ordinal less than $\omega_1^{\mathsf{CK}}$) for each $n$. So $(|T_n|_{\mathsf{AN}})_{n<\omega}$ is a descending sequence in the ordinals.

Note that Theorem \ref{new-pw} entails Theorem \ref{main-new}. Indeed, if $T$ were a counter-example to Theorem \ref{main-new} then we would get a counter-example to Theorem \ref{new-pw} by letting $T=T_n$ for each $n$.

%%%%%%%%%%%%%%%%%%%
%%%%%%%%%%%%%%%%%%%
%%%%%%%%%%%%%%%%%%%

%%%%%%%%%%%%%%%%%%%%%%%%%%%%%
%%%%%%%%%%%%%%%%%%%%%%%%%%%%%
%%%%%%%%%%%%%%%%%%%%%%%%%%%%%

\section{Avoiding Diagonalization}\label{avoiding-diag}

Considering the motivations outlined in \textsection 1, it is desirable to avoid diagoanlization in the proofs of Theorem \ref{bounding} and Theorem \ref{folklore}.

As discussed in \textsection \ref{first-folk}, the standard proofs of Spector's $\Sigma^1_1$-bounding theorem rely on diagonalization. However, there is already an alternate proof of Spector's theorem due to Beckmann and Pohlers \cite{beckmann1998applications} that uses Gentzen's methods and avoids diagonalization.

Theorem \ref{folklore}, on the other hand, relies on Lemma \ref{rathjen}, which is a formalized version of $\Sigma^1_1$-bounding. Rathjen's proof of Lemma \ref{rathjen} uses the recursion theorem to formalize the standard proof of $\Sigma^1_1$-bounding. In this section we develop an alternate proof of Lemma \ref{rathjen}. Rather than attempt to formalize the diagonalization proof of $\Sigma^1_1$-bounding in a different way, we instead formalize the Beckmann--Pohlers proof.

\subsection{Infinitary derivations}

The Beckmann-Pohlers proof of $\Sigma^1_1$-bounding involves the analysis of derivations in a cut-free infinitary proof system. We provide here a standard definition of such a proof system; for other discussion of such proof systems, see \cite{pohlers1989proof, pohlers1998subsystems}. Note that this proof system is a version of the Tait calculus. Thus, our proof system deals with formulas within which negation is only appended to atomic formulas; this is possible due to the normal form theorems available in classical logic. In what follows, let $\mathsf{Diag}(\mathbb{N})$ be the atomic diagram of $\mathbb{N}$ in the signature $(0,1,+,\times)$.

\begin{definition}\label{infinitary}
We define $\vdash^\alpha\Delta$ inductively by the following clauses:
\begin{itemize}
    \item[(AxM)] If $\Delta\cap \mathsf{Diag}(\mathbb{N})\neq\emptyset$, then $\vdash^\alpha \Delta$ for all ordinals $\alpha$.
    \item[(AxL)] If $t^\mathbb{N}=s^\mathbb{N}$, then $\vdash^\alpha \Delta,s\notin X,t\in X$ for all ordinals $\alpha$.
    \item[($\wedge$)] If $\vdash^{\alpha_i}\Delta,A_i$ and $\alpha_i<\alpha$ for $i=1,2$ then $\vdash^\alpha \Delta, A_1\wedge A_2$.
    \item[($\vee$)] If $\vdash^{\alpha_i}\Delta,A_i$ and $\alpha_i<\alpha$ for some $i\in\{1,2\}$ then $\vdash^\alpha \Delta, A_1\vee A_2$.
    \item[($\forall$)] If $\vdash^{\alpha_i}\Delta,A(i)$ and $\alpha_i<\alpha$ for all $i\in\mathbb{N}$, then $\vdash^\alpha \Delta, \forall x A(x)$.
    \item[($\exists$)] If $\vdash^{\alpha_i}\Delta,A(i)$ and $\alpha_i<\alpha$ for some $i\in\mathbb{N}$, then $\vdash^\alpha \Delta, \exists x A(x)$.
\end{itemize}
\end{definition}

The relation $\vdash^\alpha\Delta$ is to be read that there is an infinite proof tree of $\bigvee\Delta$ whose depth is bounded by the ordinal $\alpha$.

Let's briefly record two lemmas that we will make use of. First we state the ``monotonicity lemma,'' which follows immediately from the definition of $\vdash^\alpha \Delta$:
\begin{lemma}\label{monotonicity-first}
If $\vdash^\alpha\Delta$, $\alpha\leq\beta$, and $\Delta\subseteq \Gamma$, then $\vdash^\beta\Gamma$.
\end{lemma}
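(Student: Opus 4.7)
The plan is to prove the monotonicity lemma by transfinite induction on $\alpha$, or equivalently by induction on the height of the derivation witnessing $\vdash^\alpha \Delta$, with a case split according to which clause of Definition \ref{infinitary} was applied last. Throughout I will read sequents as \emph{sets} of formulas, so that ``$\Delta, A$'' abbreviates $\Delta \cup \{A\}$; this makes the usual structural rules (weakening and contraction) invisible.

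The two axiom clauses (AxM) and (AxL) are handled first, and they are essentially free: each one asserts $\vdash^\gamma \Delta$ for \emph{all} ordinals $\gamma$ provided a certain atomic formula (or pair of complementary literals) lies in $\Delta$. Since $\Delta \subseteq \Gamma$, that same formula lies in $\Gamma$, so the same axiom clause yields $\vdash^\beta \Gamma$ directly.

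For each inductive clause $(\wedge)$, $(\vee)$, $(\forall)$, $(\exists)$ the pattern is identical. Suppose the final inference derives $\vdash^\alpha \Delta$ by assembling, from premises $\vdash^{\alpha_i} \Delta', A_i$ with $\alpha_i < \alpha$, the principal formula $B \in \Delta$ (where $\Delta = \Delta' \cup \{B\}$). Because $\Delta \subseteq \Gamma$ we have $B \in \Gamma$ and $\Delta' \cup \{A_i\} \subseteq \Gamma \cup \{A_i\}$. Applying the inductive hypothesis to each premise, with $\alpha_i \leq \alpha_i$ trivially, gives $\vdash^{\alpha_i} \Gamma, A_i$. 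Since $\alpha_i < \alpha \leq \beta$, the same rule applies with conclusion sequent $\Gamma \cup \{B\} = \Gamma$ and bound $\beta$, yielding $\vdash^\beta \Gamma$.

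I do not expect a genuine obstacle here; the only thing to be careful about is the set-based reading of sequents, which ensures that the principal formula $B$ being already in $\Gamma$ does not force a separate weakening step, and that the bound $\beta$ can be slotted into the outer $(\wedge)/(\vee)/(\forall)/(\exists)$ rule because the strict inequalities $\alpha_i < \alpha$ are preserved under $\alpha \leq \beta$.
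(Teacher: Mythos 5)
Your proof is correct and is exactly the routine argument the paper has in mind: the paper states only that the lemma ``follows immediately from the definition of $\vdash^\alpha\Delta$,'' and your transfinite induction on $\alpha$ with a case split on the last clause of Definition \ref{infinitary} is the standard way to make that precise. The one point worth being explicit about --- that sequents are read as sets, so the principal formula already lying in $\Gamma$ costs nothing --- is one you have already flagged.
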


Second, we state the $\wedge$-inversion rule. For a proof of the $\wedge$-inversion rule, see \cite{pohlers1989proof} Theorem 10.7.
\begin{lemma}\label{wedge-inversion-first}
If $\vdash^\alpha \Delta,\bigwedge \{A_i \mid i\in I\}$ then, for all $i\in I$, $\vdash^\alpha A_i$. 
\end{lemma}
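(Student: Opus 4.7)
My plan is to proceed by transfinite induction on $\alpha$, with case analysis on the last inference in the derivation of $\vdash^\alpha \Delta,\bigwedge\{A_i \mid i \in I\}$. The conclusion I aim for is $\vdash^\alpha \Delta, A_i$ for each $i \in I$, reading the statement as implicitly preserving the side context $\Delta$.

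First I would dispatch the two axiom cases. Since the conjunction $\bigwedge\{A_i \mid i \in I\}$ is neither an atomic sentence from $\mathsf{Diag}(\mathbb{N})$ nor an atomic formula of the form $s \in X$ or $s \notin X$, any application of (AxM) or (AxL) to the sequent $\Delta, \bigwedge\{A_i \mid i \in I\}$ must already be witnessed by a formula in $\Delta$ alone. Hence $\Delta, A_i$ is an instance of the same axiom, and $\vdash^\alpha \Delta, A_i$ would follow immediately for every ordinal $\alpha$.

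For the inductive cases the essential dichotomy is whether the last rule introduces the principal formula $\bigwedge\{A_i \mid i \in I\}$ or instead acts on a side formula. In the side-formula cases, which cover all of $(\wedge), (\vee), (\forall), (\exists)$, the conjunction persists into each active premise. I would apply the inductive hypothesis premise-wise to obtain derivations of $\Delta', A_i$ at the respective smaller heights $\alpha' < \alpha$, and then reapply the very same rule with the same list of heights to conclude $\vdash^\alpha \Delta, A_i$. This step is uniform across the four rules and produces no change in ordinal bound.

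The principal case is when the last rule is $(\wedge)$ introducing $\bigwedge\{A_i \mid i \in I\}$ itself. Given the binary form of $(\wedge)$ in Definition \ref{infinitary}, this corresponds to $I = \{1,2\}$, and we obtain premises $\vdash^{\alpha_j} \Delta, A_j$ with $\alpha_j < \alpha$ for $j = 1,2$. A single appeal to Lemma \ref{monotonicity-first} would raise each derivation to height $\alpha$, yielding $\vdash^\alpha \Delta, A_i$ for each $i \in I$. The main obstacle is bookkeeping rather than genuine mathematical difficulty: one must verify that the conjunction remains a side formula in the premises across each rule case so that the inductive hypothesis applies, and one must be content that the ordinal bound is preserved rather than strictly decreased. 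The argument mirrors the standard treatment in \cite{pohlers1989proof} Theorem 10.7.
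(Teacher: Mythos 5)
Your proof is correct and is essentially the argument the paper itself points to: the paper gives no proof of this lemma but defers to \cite{pohlers1989proof}, Theorem 10.7, which is exactly this transfinite induction on $\alpha$ with the axiom/side-formula/principal-formula case split. Your reading of the conclusion as $\vdash^\alpha \Delta, A_i$ (retaining the side context, which the paper's statement drops, evidently by typo) is the right one, and the monotonicity appeal in the principal case matches the standard treatment.
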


The infinitary proof calculus is sound and complete for $\Pi^1_1$ sentences of arithmetic:

\begin{theorem}
For any $\Pi^1_1$ sentence $\forall\vec{X}\varphi(\vec{X})$:
$$ \mathbb{N}\vDash \forall\vec{X}\varphi(\vec{X})\Leftrightarrow \exists \alpha<\omega_1^{\mathsf{CK}} \; \vdash^\alpha F(\vec{X}).$$
\end{theorem}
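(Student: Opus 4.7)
The plan is to handle the two directions separately: soundness ($\Leftarrow$) by transfinite induction on the derivation, and completeness ($\Rightarrow$) by constructing a canonical primitive-recursive search tree whose well-foundedness encodes the truth of the sentence.

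For soundness I would prove, by transfinite induction on $\alpha$, the following uniform statement: if $\vdash^\alpha\Delta$ then $\bigvee\Delta$ is true in $\mathbb{N}$ under every assignment to the free set variables appearing in $\Delta$. The two axioms are immediate; $(AxM)$ supplies a true atomic sentence in $\Delta$, and $(AxL)$ supplies a complementary pair $s\notin X, t\in X$ with $s^{\mathbb{N}}=t^{\mathbb{N}}$ that is valid under every assignment of $X$. The propositional and existential-numerical rules pass through the induction hypothesis directly. The crucial $(\forall)$ step uses that derivations $\vdash^{\alpha_i}\Delta,A(i)$ for \emph{every} numeral $i$ yield, by induction, that $A(i)\lor\bigvee\Delta$ is true for each $i$, whence $\forall x A(x)\lor\bigvee\Delta$ is true. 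Specialising to $\Delta=\{\varphi(\vec{X})\}$ and universally closing over $\vec{X}$ yields $\mathbb{N}\vDash\forall\vec{X}\varphi(\vec{X})$.

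For completeness, the plan is to define a canonical search tree $\mathcal{T}_\varphi$ by inverting the inference rules of Definition \ref{infinitary}: at a sequent whose principal formula is a disjunction or numerical existential, branch through each disjunct or witness; at a conjunction or universal, branch through every conjunct; at an atomic or negated-atomic sequent, declare the node a leaf and check the axiom schemata. The tree has a primitive-recursive description in $\varphi$, and a path terminates exactly when its sequent becomes axiomatic. The key observation is that $\forall\vec{X}\varphi(\vec{X})$ is true in $\mathbb{N}$ precisely when $\mathcal{T}_\varphi$ is well-founded, since the only rule whose inversion depends on the choice of $\vec{X}$ is $(AxL)$, which succeeds uniformly in $\vec{X}$. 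Assigning ordinals to a well-founded recursive tree in the usual way and then reading off the derivation from the rank function gives $\vdash^\alpha \varphi(\vec{X})$, where $\alpha$ is the rank of the root.

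The main obstacle is obtaining the bound $\alpha<\omega_1^{\mathsf{CK}}$. This is precisely the statement that any recursively presented well-founded relation has order-type below the first non-recursive ordinal, and to stay faithful to the methodological constraint of \textsection \ref{avoiding-diag} it must be derived via the Gentzen-style boundedness lemma of Beckmann--Pohlers rather than by a diagonal construction against Kleene's $\mathcal{O}$. Once this bound is in place the two directions match up, and the resulting equivalence recovers Spector's $\Sigma^1_1$-bounding theorem as a corollary, which is exactly how the Beckmann--Pohlers proof is structured.
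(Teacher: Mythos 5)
Your overall architecture is the standard one (the paper states this theorem without proof, deferring to Pohlers' treatment of $\omega$-logic): soundness by transfinite induction on the derivation, and completeness via a canonical recursive search tree obtained by inverting the rules, where an infinite branch yields a falsifying assignment of $\vec{X}$ and a well-founded tree yields a derivation whose depth is the rank of the root. Both halves are correct as you describe them, including the observation that only the axiom (AxL) interacts with the free set variables and does so uniformly.

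The one step that would fail as written is your treatment of the bound $\alpha<\omega_1^{\mathsf{CK}}$. You correctly reduce it to the fact that a recursive well-founded tree has rank below $\omega_1^{\mathsf{CK}}$, but you then claim this ``must be derived via the Gentzen-style boundedness lemma of Beckmann--Pohlers.'' It cannot be: the boundedness lemma runs in the opposite direction --- it bounds $\mathsf{otyp}(\prec)$ from above by (an exponential of) the truth complexity of $\mathsf{TI}(\prec)$, i.e., it extracts lower bounds on proof depth from order types, and says nothing about why the rank of your search tree sits below $\omega_1^{\mathsf{CK}}$. Fortunately no such machinery is needed: the search tree $\mathcal{T}_\varphi$ is primitive recursive, so if it is well-founded its Kleene--Brouwer ordering is a recursive well-ordering, whose order type is $<\omega_1^{\mathsf{CK}}$ by the very definition of $\omega_1^{\mathsf{CK}}$ as the supremum of order types of recursive well-orderings; the rank of the tree is bounded by that order type. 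This argument is already diagonalization-free, so the methodological worry you raise does not arise at this point. What genuinely requires the Beckmann--Pohlers boundedness argument (or, classically, diagonalization against Kleene's $\mathcal{O}$) is Spector's bounding for \emph{$\Sigma^1_1$-presented} orderings --- which, as you correctly note at the end, is derived \emph{from} the present completeness theorem together with the boundedness lemma, not used to prove it.
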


In fact, there is a sharp restricted version of the completeness half of the theorem relating consequences of $\mathsf{ACA}_0$ and proofs of height less than $\varepsilon_0$:

\begin{theorem}\label{sigma-fact-first}
For any $\Pi^1_1$ sentence $\forall \vec{X} \varphi(\vec{X})$:
$$\mathsf{ACA}_0\vdash \forall\vec{X}\varphi(\vec{X})\Rightarrow  \exists \alpha<\varepsilon_0 \;  \vdash^\alpha \varphi(\vec{X}).$$
\end{theorem}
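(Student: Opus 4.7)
The strategy is a standard Gentzen-style partial cut-elimination argument: embed $\mathsf{ACA}_0$-proofs into a version of the infinitary calculus enriched with a cut rule, then iteratively eliminate cuts, concluding that the resulting cut-free derivation has depth below $\varepsilon_0$.

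First I would define an auxiliary relation $\vdash^\alpha_k \Delta$ recording depth $\alpha$ and a cut rank $k$ bounding the logical complexity of permitted cut-formulas. The embedding lemma I would prove is: if $\mathsf{ACA}_0 \vdash \forall \vec X \varphi(\vec X)$, then $\vdash^{\omega + n}_r \varphi(\vec X)$ for some finite $n, r$. Propositional and equality axioms use (AxM) and (AxL) at finite depth. For arithmetic comprehension $\exists X \forall n (n \in X \leftrightarrow \psi(n))$, rather than introducing a second-order existential cut, I would discharge each use of comprehension at the embedding step by substituting $\psi$ for $X$ in the subsequent subproof, keeping all cuts on arithmetic formulas. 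For the induction axiom $0 \in X \wedge \forall n(n \in X \to n+1 \in X) \to \forall n\, n \in X$, I would derive $n \in X$ for each standard $n$ by $n$ iterated cuts on the induction premises and combine via the $\omega$-rule, producing depth $\omega + k$ for some finite $k$.

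Next I would apply the standard cut-reduction lemma, $\vdash^\alpha_{k+1} \Delta \Rightarrow \vdash^{\omega^\alpha}_k \Delta$, proved by induction on $\alpha$ via the inversion lemmas (Lemma \ref{wedge-inversion-first} and its $\forall$-analogue) together with the usual principal-cut reduction. Iterating this $r$ times converts the embedded derivation of depth $\omega + n$ and cut rank $r$ into a cut-free derivation of $\varphi(\vec X)$ of depth below $\omega^{\omega^{\cdots \omega^{\omega + n}}}$ (a tower of height $r$), which lies below $\varepsilon_0$. Since $\varphi(\vec X)$ has no bound set variables and the derivation is cut-free, the subformula property guarantees no set-quantifier rules appear, placing the derivation in the calculus of Definition \ref{infinitary}.

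The main obstacle is managing arithmetic comprehension cleanly so that cut-formulas remain arithmetic; once this is arranged, the cut-reduction analysis is essentially the classical Gentzen reduction for $\mathsf{PA}$, and the bound $\varepsilon_0$ emerges by direct computation. A secondary technical concern is verifying that cut reduction interacts correctly with the $\omega$-rule in the inductive step, which hinges on monotonicity (Lemma \ref{monotonicity-first}) and additive bookkeeping on the ordinals produced by the rule.
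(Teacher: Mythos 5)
Your proposal is correct and is essentially the standard Gentzen--Sch\"utte argument (embedding into the infinitary calculus with cuts, then predicative cut elimination) that the paper itself does not spell out but merely invokes, with references to Pohlers \cite{pohlers1989proof, pohlers1998subsystems}. The one step that deserves the care you already flag is the discharge of comprehension: one must check that every second-order quantifier occurring in the finite proof can be instantiated by an arithmetic abstract (so that all residual cuts are on arithmetic formulas of bounded rank), which is exactly the predicative elimination of $\Sigma^1_1$ cuts in the standard treatment, after which the $\varepsilon_0$ bound falls out of the usual ordinal bookkeeping as you describe.
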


Note that the definition of infinitary derivations makes use of transfinite recursion and is beyond the scope $\mathsf{ACA}_0$. Nevertheless, there are many methods for formalizing infinitary derivations in such a way that appropriate versions of Lemma \ref{monotonicity-first}, Lemma \ref{wedge-inversion-first}, and Theorem \ref{sigma-fact-first} are provable in $\mathsf{ACA}_0$, all without recourse to the fixed point lemma or recursion theorem. For present purposes, we will need to formalize only those infinitary derivations whose depth is less than $\varepsilon_0$, a rather meager class of infinitary derivations. We will turn to the specifics in the next subsection.

\subsection{Formalizing infinitary derivations} In this subsection we turn to the task of formalization in $\mathsf{ACA}_0$. This task has two components. First, we must describe how it is that we \emph{define} infintary proofs in $\mathsf{ACA}_0$. Second, we must describe how it is that we \emph{reason} about infintary proofs in $\mathsf{ACA}_0$. A necessary pre-condition for completing both tasks is fixing an ordinal notation system.

\begin{remark}
We \emph{fix} a nice ordinal notation system for ordinals up to and including (at least) $2^{\varepsilon_0}+1$. We use the symbols $\{\boldsymbol{<},\boldsymbol{>},\boldsymbol{\leq},\boldsymbol{\geq}\}$ for this ordinal notation system. In the remainder of this section of the paper, when we use these symbols we are using them to refer to this fixed ordinal notation system.
\end{remark}

The basic idea behind our definition of infinitary proofs in $\mathsf{ACA}_0$ is that infinitary proofs are $\omega$-branching trees. Each node in the proof is \emph{tagged} with a sequent, ordinal notation, and a rule:
\begin{definition}
Let $\mathsf{SEQ}$ be the set of finite sequents, i.e., sets of formulas in (Tait calculus) normal form in the signature $(0,1,+,\times)$. Let $$\mathsf{RULE} =\{\mathsf{AxM}, \mathsf{AxL}, \wedge, \vee, \forall, \exists, \mathsf{CUT}, \mathsf{REP} \}.$$
\end{definition}

We demand that the trees satisfy \emph{local correctness conditions}. The local correctness conditions merely say that if a node is tagged with a sequent $\Delta$ and rule $R$, then the premises of that node are tagged with sequents that are correct for the rule $R$. Buchholz essentially introduces these local correctness conditions (changed only slightly here) in \cite{buchholz1991notation} Definitions 2.1--2.3.

\begin{definition}\label{local}
Let $(\Delta,R)\subseteq \mathsf{SEQ}\times \mathsf{RULE}$ and let $(\Delta)_{i\in I}$ be a sequence of sequents (the premises of $\Delta$). We say that $(\Delta,R)$ and $(\Delta)_{i\in I}$ jointly satisfy the local correctness conditions if each of the following holds:
\begin{itemize}
    \item[(AxM)] If $R=\mathsf{AxM}$ then $\Delta\cap\mathsf{Diag}(\mathbb{N})\neq\emptyset.$
    \item[(AxL)] If $R=\mathsf{AxL}$ then there are $t^\mathbb{N}=s^\mathbb{N}$ such that $s\notin X,t\in X\in \Delta$.
    \item[($\wedge$)] If $R=\wedge$ then $I=\{1,2\}$ and for some $A_1$ and $A_2$: 
    $$ \text{$A_1\wedge A_2\in\Delta$ and for all $i\in\{1,2\}$, $\Delta_i \subseteq \Delta,A_i$.}$$
    \item[($\wedge$)] If $R=\vee$ then $I\subseteq\{1,2\}$ and for some $A_1$ and $A_2$: 
    $$ \text{$A_1\vee A_2\in\Delta$ and for some $i\in\{1,2\}$, $\Delta_i \subseteq \Delta,A_i$.}$$
    \item[($\forall$)] If $R=\forall$ then $I=\mathbb{N}$ and for some $\forall xA(x)$: 
    $$ \text{$\forall xA(x)\in\Delta$ and for all $i\in \mathbb{N}$, $\Delta_i \subseteq \Delta,A(i)$.}$$
    \item[($\exists$)] If $R=\exists$ then $I\subseteq\mathbb{N}$ and for some $\exists xA(x)$: 
    $$ \text{$\exists xA(x)\in\Delta$ and for some $i\in \mathbb{N}$, $\Delta_i \subseteq \Delta,A(i)$.}$$
    \item[(CUT)] If $R=\mathsf{CUT}$ then $I=\{1,2 \}$ and for some $A$: 
    $$ \text{$\Delta_1 \subseteq \Delta,A$ and $\Delta_2\subseteq \Delta,\neg A$.}$$
    \item[(REP)] If $R=\mathsf{REP}$ then $I=\{1\}$ and $ \text{$\Delta_1 = \Delta$.}$
\end{itemize}
\end{definition}

\begin{definition}[$\mathsf{ACA}_0$]\label{inf-proof}
An \emph{infinitary proof} is an $\omega$-branching tree where each node is labeled by a triple $(\Delta,R,\alpha)$ consisting of a sequent $\Delta$, rule $R$, and ordinal notation $\alpha$ such that:
\begin{enumerate}
    \item The ordinal labels strictly descend from the root towards the axioms;
\item The local correctness conditions from Definition \ref{local} are satisfied.
\end{enumerate}
Note that (1) does not force the ordinal tags to be exact but merely to give bounds.

We are particular interested in those infinitary proofs in which the rule $\mathsf{CUT}$ is not applied. We write $\vdash^\alpha_\Delta$ if the sequent $\Delta$ has such an infinitary proof wherein the root has ordinal tag $\alpha$.
\end{definition}

Now we turn to the task of formalizing reasoning about infinitary derivations in $\mathsf{ACA}_0$. One particularly elegant way of formalizing such reasoning is due to Buchholz \cite{buchholz1991notation}. We fix a standard embedding $f$ of proofs of $\Pi^1_1$ statements in $\mathsf{ACA}_0$ into infinitary derivations in $\omega$-logic. Using our ordinal notation system $\boldsymbol<$ that includes a representation of $\varepsilon_0$, we can define a term system for those infinitary derivations that arise from $f$. The term of a proof in this term system encodes the information in its root, i.e., its sequent, the rule it was inferred with, and an ordinal bound. The definition of this term system for infinitary derivations uses primitive recursion but does not use the fixed point lemma or recursion theorem. 

\begin{remark}\label{arith-comp}
An infinitary proof is coded by the label of its root. Buchholz shows that there are primitive recursive functions that can be used to compute, from the code of (the root of) a proof $P$, the codes of $P$'s subtrees. Accordingly, we can use $\mathsf{ACA}_0$ (and even $\mathsf{RCA}_0$) to construct a proof tree from its code. Moreover, we will be able to prove in $\mathsf{ACA}_0$ that the defined tree satisfies the definition of an infinitary proof by the way Buchholz sets up his term system for infinitary derivations.
\end{remark}

In Definition \ref{inf-proof} we did not require that the ordinal tags are exact but merely that they are bounds. Hence, the new version of Lemma \ref{monotonicity-first} is trivial:
\begin{lemma}\label{monotonicity}
For any $\alpha\boldsymbol{<}\varepsilon_0$, $\mathsf{ACA}_0$ proves ``if $\vdash^\alpha\Delta$, $\alpha\boldsymbol{\leq}\beta$, and $\Delta\subseteq \Gamma$, then  $\vdash^\beta\Gamma$.''
\end{lemma}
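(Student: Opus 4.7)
The plan is to exploit the observation the author highlights immediately before the statement: in Definition \ref{inf-proof} the ordinal tag at a node is only required to \emph{bound} the heights of its subtrees, not to match any exact value, and similarly the sequent at a node is never required to be minimal. So the proof should be essentially a re-tagging of the root of an infinitary derivation, with every subtree left intact.

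Concretely, I would fix an infinitary proof $d$ with root label $(\Delta,R,\alpha)$ and describe a transformation $d\mapsto d'$ where $d'$ has root label $(\Gamma,R,\beta)$ and the same list of immediate subtrees as $d$. Because Buchholz's term system codes a proof by the label of its root together with primitive recursive access to its subtrees (Remark \ref{arith-comp}), this transformation is itself primitive recursive in the codes of $d$, $\Gamma$, and $\beta$, and hence is available in $\mathsf{ACA}_0$ (indeed already in $\mathsf{RCA}_0$). So the existence of $d'$ given $d$ is unproblematic.

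What remains is to check that $d'$ satisfies the two clauses of Definition \ref{inf-proof}. For the ordinal-descent clause: the immediate subtrees of $d'$ are the immediate subtrees of $d$, whose root ordinals are $\boldsymbol{<}\alpha\boldsymbol{\leq}\beta$, so they remain $\boldsymbol{<}\beta$; within each subtree nothing is changed. For local correctness at the new root: the rule $R$ is unchanged, and every premise condition in Definition \ref{local} is of one of two forms — an existence condition on $\Delta$ (e.g.\ $\Delta\cap\mathsf{Diag}(\mathbb{N})\neq\emptyset$, or $A_1\wedge A_2\in\Delta$), or a subset condition of the shape $\Delta_i\subseteq\Delta,A_i$ — and both kinds of condition are preserved when $\Delta$ is enlarged to any $\Gamma\supseteq\Delta$. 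Local correctness at every non-root node of $d'$ is inherited directly from $d$ since those nodes and their labels are untouched.

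Each ingredient — forming the modified term, verifying the descent of ordinal tags, and verifying the local correctness clauses — is arithmetical and uniform in the parameters $\alpha,\beta,\Delta,\Gamma$ and the code of $d$, so $\mathsf{ACA}_0$ verifies the whole argument for each fixed $\alpha\boldsymbol{<}\varepsilon_0$. I do not expect any genuine obstacle; the only thing worth being careful about is simply confirming that the chosen term system admits a primitive recursive ``relabel the root'' operation, which is automatic from the way Buchholz's term system is designed and is exactly the reason the author phrased Definition \ref{inf-proof} with bounds rather than exact heights.
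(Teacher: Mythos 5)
Your proposal is correct and matches the paper's approach: the paper simply observes that since Definition \ref{inf-proof} requires the ordinal tags only to be bounds (and the local correctness conditions of Definition \ref{local} are monotone in the root sequent), the lemma is trivial, which is exactly the root-relabelling argument you spell out. Your added check that the relabelling is primitive recursive in Buchholz's term system is a reasonable elaboration of what the paper leaves implicit.
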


For the analogue of Lemma \ref{wedge-inversion-first}, we refer the reader to the proof of Theorem 10.7 in \cite{pohlers1989proof}. Note that Pohlers proves Theorem 10.7 by induction along $\alpha$. In the following we can follow suit since we are assuming $\alpha \boldsymbol<\varepsilon_0$.

\begin{lemma}\label{wedge-inversion}
 For any $\alpha\boldsymbol{<}\varepsilon_0$, $\mathsf{ACA}_0$ proves ``if $\vdash^\alpha \Delta,\bigwedge \{A_i \mid i\in I\}$ then, for all $i\in I$,  $\vdash^\alpha A_i$.''
\end{lemma}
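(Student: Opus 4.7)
The plan is to formalize the standard proof of the $\wedge$-inversion rule (see, e.g., \cite{pohlers1989proof} Theorem 10.7) by transfinite induction on $\alpha$. Because the statement is only claimed for each fixed $\alpha \boldsymbol< \varepsilon_0$, and $\mathsf{ACA}_0$ proves transfinite induction for arithmetic formulas up to every such $\alpha$, the induction can be carried out internally in $\mathsf{ACA}_0$ once we check that the relevant predicates are arithmetic. This last point is where Remark \ref{arith-comp} and Buchholz's term system do the work: the code of an infinitary derivation determines its root label and, via primitive recursive functions, the codes of its subtrees, so ``$\vdash^\alpha \Delta$'' is an arithmetic predicate of the underlying code.

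I would then fix $\alpha \boldsymbol< \varepsilon_0$, reason inside $\mathsf{ACA}_0$, and suppose we have a code for a derivation $P$ of $\Delta, \bigwedge\{A_i \mid i \in I\}$ whose root is tagged $(\Gamma, R, \beta)$ with $\beta \boldsymbol\leq \alpha$. The argument splits on $R$. In the axiom cases ($\mathsf{AxM}$, $\mathsf{AxL}$), the conjunction $\bigwedge\{A_i \mid i \in I\}$ is not of a form that could witness the axiom condition, so $\Delta$ already does, and $\vdash^\alpha \Delta, A_i$ follows by reapplying the same axiom. When $R = \wedge$ and $\bigwedge\{A_i \mid i \in I\}$ is the principal formula, Buchholz's subtree functions produce derivations of $\Delta, A_i$ with strictly smaller ordinal tags, and Lemma \ref{monotonicity} promotes these to $\vdash^\alpha \Delta, A_i$. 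In every remaining case, $R$ has been applied to a principal formula distinct from $\bigwedge\{A_i \mid i \in I\}$, so each premise of $R$ contains $\bigwedge\{A_i \mid i \in I\}$ and has a strictly smaller ordinal tag. The inductive hypothesis then yields derivations of the corresponding premises with $\bigwedge\{A_i \mid i \in I\}$ replaced by $A_i$, and reapplying $R$ via the primitive recursive constructor associated to that rule gives $\vdash^\alpha \Delta, A_i$.

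The main obstacle is not conceptual, since the combinatorial skeleton of the argument is entirely standard. What requires care is verifying that the operations of reading root labels, extracting subtrees, and reassembling a derivation by reapplying a rule with a modified side sequent are all primitive recursive and that $\mathsf{ACA}_0$ can check they preserve the local correctness conditions of Definition \ref{local}. All of this bookkeeping is packaged in Buchholz's term system from \cite{buchholz1991notation}, so the plan is to cite that framework rather than redevelop the coding. Once it is fixed, the argument is a transfinite induction up to $\alpha \boldsymbol< \varepsilon_0$ on an arithmetic predicate, which closes the proof inside $\mathsf{ACA}_0$.
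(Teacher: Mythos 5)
Your proposal is correct and takes essentially the same route as the paper, which likewise defers to the proof of Theorem 10.7 in Pohlers, carried out by induction on $\alpha$, and observes that this induction is available in $\mathsf{ACA}_0$ for each fixed $\alpha\boldsymbol{<}\varepsilon_0$ once derivations are handled via Buchholz's term system (Remark \ref{arith-comp}). Incidentally, the conclusion you establish, $\vdash^\alpha \Delta, A_i$ with the side sequent retained, is the form the paper actually uses later in the boundedness lemma.
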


Finally, we note that the following version of Theorem \ref{sigma-fact-first} follows easily given how we have set things up:
\begin{theorem}\label{sigma-fact}
For any $\Pi^1_1$ sentence $\forall \vec{X} \varphi(\vec{X})$, if $\mathsf{ACA}_0\vdash \forall\vec{X}\varphi(\vec{X})$ then for some $\alpha\boldsymbol{<}\varepsilon_0$, $ \mathsf{ACA}_0$ proves that $\vdash^\alpha \varphi(\vec{X}).$
\end{theorem}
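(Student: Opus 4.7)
The plan is to formalize the standard Gentzen-style embedding-plus-cut-elimination argument inside $\mathsf{ACA}_0$, exploiting the fact that everything in Buchholz's term system is primitive recursive. The argument splits naturally into an embedding step and a cut-elimination step, and the formalization rides on the fact that both steps are given by primitive recursive operations on proof codes, so $\mathsf{ACA}_0$ can both define them and verify their correctness term-by-term.

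First I would set up an embedding lemma: for every finite proof $\mathcal{D}$ in $\mathsf{ACA}_0$ of a sentence $\psi$ (in Tait normal form), there is a primitive recursive operation producing an infinitary derivation in the sense of Definition \ref{inf-proof}, but with the cut rule allowed, of $\psi$ whose depth is bounded by some $\omega \cdot k \boldsymbol{<} \omega^2$ with $k$ depending on $\mathcal{D}$, and whose cut-formulas all have complexity below some finite bound $r(\mathcal{D})$. This is a routine case analysis on the rules of the finite calculus: propositional rules and quantifier rules translate directly, the induction schema is unfolded into an $\omega$-many-premise derivation using ($\forall$), and comprehension axioms are handled by using a cut on the comprehension instance. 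Since this operation is primitive recursive on proof codes, $\mathsf{ACA}_0$ both constructs the infinitary derivation (per Remark \ref{arith-comp}) and verifies via induction on the finite proof $\mathcal{D}$ that the resulting tree meets the local correctness conditions and the ordinal bounds.

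Second I would invoke cut elimination in the term-system formulation. The key estimates are the standard ones: a reduction lemma that collapses a single application of cut on a formula of complexity $n+1$ into derivations whose cuts are of complexity at most $n$ at the cost of adding the depths; and an elimination lemma stating that if $\vdash^{\alpha}_{n+1} \Delta$ (allowing cuts of complexity $\boldsymbol{\leq} n+1$), then $\vdash^{2^{\alpha}}_{n} \Delta$. Both lemmas are proved by transfinite induction along $\alpha \boldsymbol{<} \varepsilon_0$, which is available in $\mathsf{ACA}_0$ since we only need induction along a fixed notation for an ordinal $\boldsymbol{<} \varepsilon_0$ (and in particular below $\omega^{\omega^{\omega^{\cdots}}}$ a finite tower that we control). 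Iterating the elimination lemma $r = r(\mathcal{D})$ many times converts the original embedded derivation of depth $\omega \cdot k$ with cut rank $r$ into a cut-free derivation of depth bounded by a tower $2^{2^{\cdots 2^{\omega \cdot k}}}$ of height $r$, which is some $\alpha \boldsymbol{<} \varepsilon_0$. Applying this to our assumed proof $\mathcal{D}$ of $\forall \vec{X}\varphi(\vec{X})$ and then using $\wedge$-inversion (Lemma \ref{wedge-inversion}) to strip the universal set quantifiers delivers the desired cut-free $\vdash^{\alpha} \varphi(\vec{X})$ for some $\alpha \boldsymbol{<} \varepsilon_0$.

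The main obstacle will be to verify, inside $\mathsf{ACA}_0$, that the cut-elimination operations terminate with the asserted depth and cut-rank bounds. The issue is that although each individual application of the reduction and elimination lemmas is primitive recursive on proof-codes, the outer recursion on the ordinal label requires transfinite induction up to the current depth. One has to check that the instance of transfinite induction needed at each stage is below $\varepsilon_0$ and hence provable in $\mathsf{ACA}_0$, and that the termination at the top level also lies below $\varepsilon_0$ so that the resulting ordinal notation $\alpha$ falls inside the fixed ordinal notation system. This is the point at which Buchholz's setup is crucial: the primitive recursive definitions of the reduction and elimination operators, together with explicit ordinal bounds built into the term system, let $\mathsf{ACA}_0$ verify both termination and the bound $\alpha \boldsymbol{<} \varepsilon_0$ without appeal to any stronger induction principle or to the recursion theorem.
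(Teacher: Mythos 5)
Your proposal is correct and follows essentially the same route as the paper, which likewise obtains the result by embedding the finite $\mathsf{ACA}_0$ proof into Buchholz's term system for infinitary derivations (where the embedding and cut-elimination operators are primitive recursive on proof codes) and then reconstructing the $\omega$-proof inside $\mathsf{ACA}_0$ via Remark \ref{arith-comp}; you simply spell out the embedding and cut-elimination estimates that the paper leaves implicit in the phrase ``the usual embedding $f$.'' One small point: the final appeal to $\wedge$-inversion to strip the second-order quantifiers is unnecessary (and not quite what Lemma \ref{wedge-inversion} provides), since the infinitary calculus of Definition \ref{infinitary} has no rule for set quantifiers and the embedding already yields a derivation of $\varphi(\vec{X})$ with the set variables free.
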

Indeed, if $\mathsf{ACA}_0\vdash \forall\vec{X}\varphi(\vec{X})$ then, through the usual embedding $f$ of $\mathsf{ACA}_0$ proofs into $\omega$-logic, $\mathsf{ACA}_0$ can prove that there is an infinitary derivation with height $\boldsymbol<\varepsilon_0$ of $\varphi(\vec{X})$. We get a term for this proof in Buchholz's term system and then, by Remark \ref{arith-comp}, we use it to construct an $\omega$-proof of height $\alpha$ of $\varphi(\vec{X})$, all in $\mathsf{ACA}_0$.

Before continuing, we want to note that Lemma \ref{monotonicity}, Lemma \ref{wedge-inversion}, or Theorem \ref{sigma-fact} are all proved without recourse to the recursion theorem or self-reference.

\subsection{The boundedness lemma}

We need to check that a version of what is called ``the boundedness lemma'' is provable in $\mathsf{ACA}_0$. Beckmann and Pohlers claim that a version of the boundedness lemma is already implicit in Gentzen's \cite{gentzen1969provability} proof of the $\mathsf{PA}$ non-derivability of $\varepsilon_0$-induction. To state this result, let us recall one definition that occurs frequently in the work of Pohlers.\footnote{Note that sometimes (e.g., in \cite{pohlers1998subsystems}) a different definition is given and this definition is stated as a theorem. Elsewhere, as in \cite{beckmann1998applications}, the definition given here is used.}
\begin{definition}
The truth complexity $\mathsf{tc}\big(\forall \vec{X}\varphi(\vec{X})\big)$ of a $\Pi^1_1$ statement $\forall \vec{X}\varphi (\vec{X})$ is the least $\alpha$ such that $\vdash^\alpha \varphi(\vec{X})$.
\end{definition}

Before continuing we will also fix some notation. We let:
$$\mathsf{field}(\prec):= \{ x \mid \exists y (x\prec y \vee x \prec y) \} $$
$$\mathsf{Prog}(\prec,X):=\forall x\Big( \big(x \in\mathsf{field}(\prec) \wedge \forall y (y\prec x\to y\in X) \big) \to x\in X \Big)$$
$$\mathsf{TI}(\prec):=\forall X\big(\mathsf{Prog}(\prec,X) \to \forall x \in \mathsf{field}(\prec)\; x\in X\big).$$ 
Note that $\mathsf{TI}(\prec)$ expresses transfinite induction along $\prec$ and for arithmetically definable $\prec$ the sentence $\mathsf{TI}(\prec)$ is $\Pi^1_1$. An upshot of the boundedness lemma is the boundedness theorem, which establishes a tight connection between $\mathsf{otyp}(\prec)$ and $\mathsf{tc}\big(\mathsf{TI}(\prec)\big)$. Beckmann and Pohlers attribute the following consequence of the boundedness theorem to Gentzen:
\begin{theorem}[Gentzen]\label{gentzen-bounding}
For any arithmetic well-ordering $\prec$, $$\mathsf{otyp}(\prec) \leq 2^{\mathsf{tc}\big(\mathsf{TI}(\prec)\big)}.$$
\end{theorem}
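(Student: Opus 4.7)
The plan is to derive the theorem from a more general \emph{boundedness lemma}, proved by induction on cut-free derivation height---the strategy implicit in Gentzen's \cite{gentzen1969provability} argument and made explicit by Beckmann--Pohlers \cite{beckmann1998applications}. First I would rewrite $\mathsf{TI}(\prec)$ in Tait normal form as
\[ \forall X\bigl(\neg\mathsf{Prog}(\prec,X) \vee \forall x(x\notin\mathsf{field}(\prec) \vee x\in X)\bigr), \]
and peel away its outer connectives by inversion. Since cut-free derivations treat $\forall X$-statements semantically (a derivation of $\forall X A(X)$ is just a derivation of $A(X)$ with $X$ free) and since the $\vee$- and $\forall$-inversion rules dualize Lemma \ref{wedge-inversion} in the routine way, any cut-free derivation $\vdash^{\alpha}\mathsf{TI}(\prec)$ yields, for every $n\in\mathbb{N}$,
\[ \vdash^{\alpha} \neg\mathsf{Prog}(\prec,X),\; n\notin\mathsf{field}(\prec),\; n\in X. \]

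Next I would prove the boundedness lemma itself: whenever $\vdash^{\beta} \Delta, n\in X$ with $\Delta$ containing only arithmetic formulas and a parametrized copy of $\neg\mathsf{Prog}(\prec,X)$, either $n\notin\mathsf{field}(\prec)$ or the $\prec$-rank of $n$ is strictly less than $2^{\beta}$. This would go by transfinite induction on $\beta$, case-splitting on the final inference. The crucial case is when the active formula is $\neg\mathsf{Prog}(\prec,X)$, unpacked via an $\exists$-rule followed by $\wedge$-rules whose premises jointly assert the existence of a witness $m\in\mathsf{field}(\prec)$ with $m\notin X$ and with all of its $\prec$-predecessors in $X$; the inductive hypothesis applied to the $\forall$-premise provides a uniform rank bound on the predecessors, and summing the two premise-ordinals $\beta_1,\beta_2<\beta$ arising from the $\wedge$-rule gives the bound $2^{\beta_1}+2^{\beta_2}\leq 2^{\beta}$---which is precisely the source of the base-$2$ exponential in the theorem's statement.

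With the boundedness lemma in hand, the theorem is immediate: for each $n\in\mathsf{field}(\prec)$, the inverted derivation of $\neg\mathsf{Prog}(\prec,X), n\notin\mathsf{field}(\prec), n\in X$ together with the boundedness lemma forces the $\prec$-rank of $n$ below $2^{\alpha}$, whence $\mathsf{otyp}(\prec)\leq 2^{\alpha}=2^{\mathsf{tc}(\mathsf{TI}(\prec))}$.

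The main obstacle will be isolating exactly the right class of ``boundedness-friendly'' sequents so that the invariant is both preserved by every inference rule---especially the $\forall x$-rule, where every numerical instance of the premise must share a common rank bound---and strong enough to follow from the inverted form of $\mathsf{TI}(\prec)$. Getting the doubling exponent exactly right, rather than a looser bound like $\omega^{\beta}$, hinges on the additive combination of premise heights at the $\wedge$-rule; this is the distinctive Gentzen-style bookkeeping that drives the whole argument.
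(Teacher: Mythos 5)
Your overall strategy is the right one, and it is the same route the paper takes: invert the cut-free derivation witnessing $\mathsf{tc}\big(\mathsf{TI}(\prec)\big)$ and feed the resulting sequents into a boundedness lemma proved by transfinite induction on derivation height (this is precisely the shape of the paper's boundedness lemma and Corollary \ref{formal-bounding}, and the final reduction you describe from the lemma to Theorem \ref{gentzen-bounding} is correct). The problem is that the boundedness lemma you actually state is false, and the part you defer as ``the main obstacle'' is where all of the content lives. If $\Delta$ may contain arbitrary arithmetic formulas, take $\Delta=\{0=0,\ \neg\mathsf{Prog}(\prec,X)\}$: then $\vdash^{0}\Delta,n\in X$ by (AxM) for \emph{every} $n$, and your lemma would yield $|n|_{\prec}<2^{0}=1$ for every $n\in\mathsf{field}(\prec)$, which fails for any ordering of type greater than $1$. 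The conclusion cannot be a rank bound on one distinguished positive occurrence $n\in X$; it has to be the disjunctive statement that $\bigvee\Delta'[X\mapsto\,\prec_{\gamma}]$ is true, where $\Delta'$ is the entire $X$-positive part of the sequent --- a true arithmetic side formula then discharges the axiom cases instead of refuting the invariant.

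Second, your invariant does not track the negative occurrences $t\notin X$ that are created when $\neg\mathsf{Prog}(\prec,X)$ is unpacked, and without them the induction does not close. When the last inference is the $\exists$-rule on $\neg\mathsf{Prog}(\prec,X)$, there is a \emph{single} premise at some height $\alpha_{0}<\alpha$ containing the conjunction $t\in\mathsf{field}(\prec)\wedge\forall y(\neg\, y\prec t\vee y\in X)\wedge t\notin X$; one then applies $\wedge$-inversion (Lemma \ref{wedge-inversion}), not a case split on a subsequent $\wedge$-rule, so the two resulting derivations have the \emph{same} height $\alpha_{0}$ rather than two distinct heights $\beta_{1},\beta_{2}$. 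The induction hypothesis must be applied to them in sequence: the first application bounds $|t|_{\prec}$ by $\beta+2^{\alpha_{0}}$, and that bound is fed into the base parameter of the second application. This is why the lemma has to carry an additive offset: the correct conclusion is truth of $\bigvee\Delta'[X\mapsto\,\prec_{\beta+2^{\alpha}}]$ with $\beta=\max_{i}|t_{i}|_{\prec}$ ranging over the formulas $t_{i}\notin X$ occurring in the sequent, and the exponential arises from $\beta+2^{\alpha_{0}}+2^{\alpha_{0}}\leq\beta+2^{\alpha}$. With the invariant restated in this form (as in the paper and in \cite{beckmann1998applications,pohlers1989proof}), your deduction of Theorem \ref{gentzen-bounding} goes through; as you have stated it, both the axiom case and the crucial $\neg\mathsf{Prog}$ case fail.
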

Beckmann \cite{beckmann1998applications} has sharpened Gentzen's result to show that $\mathsf{otyp}(\prec)\leq \mathsf{tc}\big(\mathsf{TI}(\prec)\big)$, which he derives from a sharp version of the boundedness lemma. For present purposes, we will not need the sharp version. To state the version that we will need, we need to cover some definitions.

\begin{definition}
A formula $\varphi$ is \emph{$X$-positive} if $\varphi$ has no occurrences of $X$ of the form $t\notin X$.
\end{definition}

\begin{definition}
If $\varphi$ is a formula, then $\varphi[X\mapsto \psi]$ is the set of formulas we get by replacing each occurrence of $t\in X$ in $\varphi$ with $\psi(t)$. If $\Delta=\{\varphi_1,\dots,\varphi_n\}$ is a set of formulas, then $\Delta[X\mapsto \psi]=\{\varphi_1[X\mapsto \psi],\dots,\varphi_n[X\mapsto \psi]\}$.
\end{definition}

\begin{definition}
For any well-ordering $\prec$:
\begin{enumerate}
    \item $|n|_\prec$ is the rank of $n$ in $\prec$;
    \item $\prec_\alpha = \{n\mid |n|_\prec \boldsymbol<\alpha\}.$
\end{enumerate} 
\end{definition}

\begin{remark}\label{remark-ordering}
Note that $y\in X[X\mapsto \prec_\alpha] = |y|_\prec \boldsymbol<\alpha$.
\end{remark}

The following lemma---the boundedness lemma---is a version of Lemma 13.9 in \cite{pohlers1989proof}. Our proof is essentially the same as that in Pohlers, except that Pohlers relies on some notions that are not formalizable in $\mathsf{ACA}_0$. In particular, we are careful to use partial truth-predicates rather than speak of satisfaction in $\mathbb{N}$. 

\begin{lemma}[$\mathsf{ACA}_0$]
Let $\alpha\boldsymbol{<}\varepsilon_0$ be well-founded. Let $\prec$ be an arithmetic well-ordering. Let $\Delta$ be a finite set of $X$-positive formulas. Suppose that:
$$\vdash^\alpha \neg \mathsf{Prog}(\prec,X),t_1\notin X,\dots,t_n\notin X,\Delta.$$ Then it follows that: $$\mathsf{True}_{\Pi^1_1}\Big( \forall X \big(\bigvee\Delta[X\mapsto \prec_{\gamma}] \big) \Big)$$ where $\gamma=\beta+2^\alpha$ and $\beta=\mathsf{max}\{|t_1|_\prec,\dots,|t_n|_{\prec}\}$.
\end{lemma}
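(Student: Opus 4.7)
The plan is to prove the boundedness lemma by external $\varepsilon_0$-induction on $\alpha$: for each fixed notation $\alpha \boldsymbol{<} \varepsilon_0$, $\mathsf{ACA}_0$ proves the corresponding instance (with $n$, the $t_i$, $\prec$, and $\Delta$ treated as parameters), and the inductive combination is performed at the meta-level. Reasoning inside $\mathsf{ACA}_0$, I would recover the derivation from its Buchholz code (Remark \ref{arith-comp}) and case-split on the last rule.

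Axiom cases (AxM, AxL) are straightforward: since $|t_i|_\prec \boldsymbol{\leq} \beta \boldsymbol{<} \gamma$, each $t_i \notin X$ becomes false after the substitution, so the truth of the substituted sequent must come from $\Delta$ or from the axiom's own atomic content. For a logical rule whose principal formula lies in $\Delta$ (note that $t_i \notin X$ is negated atomic and hence can only be principal in an axiom), apply IH to each premise of height $\alpha_i \boldsymbol{<} \alpha$ with the $t_i$-list unchanged, so $\gamma_i = \beta + 2^{\alpha_i} \boldsymbol{<} \gamma$. The $X$-positivity of $\Delta$ together with $\prec_{\gamma_i} \subseteq \prec_\gamma$ lets us upgrade each conclusion from $\gamma_i$ to $\gamma$, and combining them via the semantics of the principal connective yields $\bigvee \Delta[X \mapsto \prec_\gamma]$.

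The heart of the argument is the case when $\neg\mathsf{Prog}(\prec, X)$ is principal. In Tait normal form this is $\exists x\big(x \in \mathsf{field}(\prec) \wedge \forall y(y \not\prec x \vee y \in X) \wedge x \notin X\big)$, so the last rule is $\exists$ with some witness $k$, producing one premise of height $\alpha' \boldsymbol{<} \alpha$. I would apply Lemma \ref{wedge-inversion} to split this into three premises, of which the relevant two contain $k \notin X$ and $\forall y(y \not\prec k \vee y \in X)$. Applying IH to the latter with the unchanged $t_i$-list gives truth of $\bigvee(\Delta \cup \{\forall y(y \not\prec k \vee |y|_\prec \boldsymbol{<} \gamma')\})[X \mapsto \prec_{\gamma'}]$, where $\gamma' = \beta + 2^{\alpha'}$. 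If the $\Delta$-disjunct already holds, monotonicity of positive substitution concludes; otherwise every $y \prec k$ satisfies $|y|_\prec \boldsymbol{<} \gamma'$, forcing $|k|_\prec \boldsymbol{\leq} \gamma'$. Now apply IH to the $k \notin X$ premise with $k$ appended to the $t_i$-list; the new maximum parameter is $\max\{\beta, |k|_\prec\} \boldsymbol{\leq} \gamma'$, so the new bound is $\gamma_k \boldsymbol{\leq} \gamma' + 2^{\alpha'} = \beta + 2^{\alpha'+1} \boldsymbol{\leq} \beta + 2^\alpha = \gamma$, using $\alpha' + 1 \boldsymbol{\leq} \alpha$ and $2^{\alpha'+1} = 2^{\alpha'} + 2^{\alpha'}$. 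Monotonicity once more finishes the case.

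The main obstacle I anticipate is this final case. One must carry out the ordinal arithmetic inside the fixed notation system below $\varepsilon_0$, derive a $\forall$-inversion companion to Lemma \ref{wedge-inversion} (provable by the same inductive recipe), and avoid any appeal to the recursion theorem or self-reference---consistent with the non-diagonalization agenda of Section \ref{avoiding-diag}. A secondary care-point is verifying that each IH-instance remains $\Pi^1_1$-expressible in $\mathsf{ACA}_0$'s sense, which it is because $\prec_\gamma$ is $\Delta^1_1$-definable for standard $\gamma$ and our substitutions preserve $X$-positivity.
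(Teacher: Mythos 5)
Your case analysis and, in particular, your treatment of the critical case where $\neg\mathsf{Prog}(\prec,X)$ is principal coincide with the paper's proof: the same appeal to $\wedge$-inversion, the same two applications of the induction hypothesis (first to bound $|k|_\prec$ by $\beta+2^{\alpha'}$ modulo the $\Delta$-disjunct, then with $k$ appended to the $t_i$-list), and the same ordinal computation $2^{\alpha'}+2^{\alpha'}\boldsymbol{\leq} 2^{\alpha}$. The paper phrases this case as a reductio rather than your direct case split, and also dispatches $\mathsf{REP}$ by passing to a subderivation; neither difference matters. Your aside about needing a $\forall$-inversion companion is unnecessary: the universal formula $\forall y(\neg\, y\prec k \vee y\in X)$ is simply carried along as an $X$-positive member of $\Delta$ and evaluated after substitution, which is exactly what you in fact do.

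The one genuine divergence is your decision to run the induction on $\alpha$ \emph{externally}, and as stated this does not go through. At an $\omega$-branching inference (a $\forall$-rule, whether its principal formula lies in $\Delta$ or arises inside the unfolding of $\neg\mathsf{Prog}$), the premises carry ordinal tags $\alpha_i\boldsymbol{<}\alpha$ that are data of the internal derivation and range over infinitely many notations. To conclude, inside $\mathsf{ACA}_0$, that \emph{every} substituted premise-sequent is true, you need a single internal statement universally quantified over all $\delta\boldsymbol{<}\alpha$; an external family of separate $\mathsf{ACA}_0$-proofs, one for each fixed notation, cannot be amalgamated into that statement without an $\omega$-rule. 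This is precisely why the lemma carries the hypothesis that $\alpha$ is well-founded: the paper treats $\alpha$ as an internal variable and performs the transfinite induction inside $\mathsf{ACA}_0$, licensed by that hypothesis (a point that itself deserves care, since the induction formula is $\Pi^1_1$). Replace your meta-level induction with this internal one and the remainder of your argument is the paper's.
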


\begin{proof}
We prove the claim by induction on $\alpha$; note that this is licit since we are assuming that $\alpha$ is well-founded. We split into cases based on the final inference in the derivation that yields $\vdash^\alpha \neg \mathsf{Prog}(\prec,X),t_1\notin X,\dots,t_n\notin X,\Delta$. Note that we do not have to consider the inference $\mathsf{CUT}$ since the derivation is cut-free. Note that we also do not have to consider the inference $\mathsf{REP}$; if the given proof ends with repetition we simply look at some smaller proof of the same sequent that does not end with repetition.

\textbf{Case 1: The sequent $\neg \mathsf{Prog}(\prec,X),t_1\notin X,\dots,t_n\notin X,\Delta$ is an axiom according to (AxM).} The set $\Delta$ contains a true atomic formula $\varphi$.  Then $\varphi=\varphi[X\mapsto\prec_{\gamma}]$. So $\Delta[X\mapsto \prec_{\gamma}]$ contains a true formula, namely $\varphi=\varphi[X\mapsto\prec_{\gamma}]$.

\textbf{Case 2: The sequent $\neg \mathsf{Prog}(\prec,X),t_1\notin X,\dots,t_n\notin X,\Delta$ is an axiom according to (AxL).} $\Delta$ contains a formula $t_i\in X$ for some $i\leq n$. If $\beta_i =|t_i|_{\prec}$, then $\beta_i\leq\beta<\gamma$ and $\mathsf{True}_{\Pi^1_1}\big( (t_i\in X)[X\mapsto \prec_\gamma] \big)$ since $\beta_i<\gamma$. Hence $\mathsf{True}_{\Pi^1_1}\big( \bigvee\Delta[X\mapsto \prec_\gamma] \big)$.

\textbf{Case 3: The final inference yields $\Delta$.} Assume that the main formula of the final inference belongs to $\Delta$. Then we have the premises:
$$\vdash^{\alpha_i} \neg \mathsf{Prog}(\prec,X),t_1\notin X,\dots,t_n\notin X,\Delta_i$$ where $\Delta_i$ contains only $X$-positive formulas. From the induction hypothesis we infer that $\forall i \; \mathsf{True}_{\Pi^1_1}\Big( \forall X \big(\bigvee\Delta_i[X\mapsto \prec_{\gamma_i}] \big) \Big)$ where $\gamma_i=\beta+2^{\alpha_i}$. Lemma \ref{monotonicity}, i.e., the mototonicity lemma, delivers:
$$\forall i \; \mathsf{True}_{\Pi^1_1}\Big( \forall X \big(\bigvee\Delta_i[X\mapsto \prec_\gamma] \big) \Big).$$ Appealing to Lemma \ref{monotonicity} once again we infer that:
$$\mathsf{True}_{\Pi^1_1}\Big( \forall X \big(\bigvee\Delta[X\mapsto \prec_\gamma] \big) \Big)$$
since validity is preserved by all inferences.

\textbf{Case 4: The final inference yields $\neg \mathsf{Prog}(\prec,X)$.} The main formula of the final inference is:
$$\exists x\Big( \big( x\in\mathsf{field}(\prec) \wedge \forall y(y\prec x \to y \in X) \big) \wedge x\notin X\Big).$$
Then we have the premise:
$$\vdash^{\alpha_0} \neg \mathsf{Prog}(\prec,X),t\in\mathsf{field}(\prec)\wedge\forall y(\neg y\prec t \vee y\in X)\wedge t\notin X,t_1\notin X,\dots,t_n\notin X,\Delta.$$ By $\wedge$-inversion, i.e., Lemma \ref{wedge-inversion}, we obtain:
\begin{equation}\label{inversion-one}
\vdash^{\alpha_0} \neg \mathsf{Prog}(\prec,X), t\in\mathsf{field}(\prec), \forall y(\neg y\prec t \vee y \in X),t_1\notin X,\dots, t_n\notin X,\Delta
\end{equation}
and also:
\begin{equation}\label{inversion-two}
\vdash^{\alpha_0} \neg \mathsf{Prog}(\prec,X),t\notin X,t_1\notin X,\dots,t_n\notin X,\Delta.
\end{equation}

Assume towards a contradiction that $\neg \mathsf{True}_{\Pi^1_1}\Big( \forall X \big(\bigvee\Delta[X\mapsto \prec_{\gamma}] \big) \Big).$

Applying the induction hypothesis to (\ref{inversion-one}) we obtain:
\begin{equation}\label{inversion-three}
\mathsf{True}_{\Pi^1_1}\Big( \forall X \big(\bigvee\Delta[X\mapsto \prec_{2^{\gamma_0}}]\vee \forall y(y\prec t \to y\in X)[X\mapsto \prec_{2^{\gamma_0}}] \big) \Big)
\end{equation}
where $\gamma_0=\beta+2^{\alpha_0}$.

By Lemma \ref{monotonicity}: $$\neg \mathsf{True}_{\Pi^1_1}\Big( \forall X \big(\bigvee\Delta[X\mapsto \prec_{\gamma}] \big) \Big)\text{ entails }\neg \mathsf{True}_{\Pi^1_1}\Big( \forall X \big(\bigvee\Delta[X\mapsto \prec_{2^{\gamma_0}}] \big) \Big).$$ By (\ref{inversion-three}) we then obtain that $y\in \prec_{2^{\gamma_0}}$ for all $y\prec t$, i.e., $|t|_{\prec}\boldsymbol\leq 2^{\gamma_0}$. Letting $\beta_0:=\mathsf{max}\{|t|_\prec,\beta\}$, then we have $\beta_0\boldsymbol\leq\gamma_0$. Applying the induction hypothesis to (\ref{inversion-two}), we obtain:
$$\mathsf{True}_{\Pi^1_1}\Big( \forall X \big(\bigvee\Delta[X\mapsto \prec_{\beta_0 +2^{\alpha_0}}] \big) \Big).$$

Note that $\beta_0\boldsymbol\leq \beta+ 2^{\alpha_0}$ and also $2^{\alpha_0}+2^{\alpha_0} \boldsymbol\leq 2^\alpha$. Hence: $$\beta_0 +2^{\alpha_0} \boldsymbol\leq \beta+2^{\alpha_0} +2^{\alpha_0} \boldsymbol\leq \beta +2^\alpha = \gamma.$$ Lemma \ref{monotonicity} then yields:
$$\mathsf{True}_{\Pi^1_1}\Big( \forall X \big(\bigvee\Delta[X\mapsto \prec_{\gamma}] \big) \Big).$$
Contradicting our initial assumption.
\end{proof}

For the purposes of the present paper, we will appeal only to the following special case of the previous lemma:

\begin{corollary}[$\mathsf{ACA}_0$]\label{formal-bounding}
Let $\alpha\boldsymbol< \varepsilon_0$ be well-founded. Let $\prec$ be an arithmetic well-ordering. Let $\Delta$ be a finite set of $X$-positive formulas. Suppose that:
$$\vdash^\alpha \neg \mathsf{Prog}(\prec,X),\Delta.$$ Then it follows that: $$\mathsf{True}_{\Pi^1_1}\Big( \forall X \big(\bigvee\Delta[X\mapsto \prec_{2^\alpha}] \big) \Big).$$
\end{corollary}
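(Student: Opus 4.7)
The plan is to obtain this corollary as an immediate specialization of the preceding boundedness lemma by taking the list of distinguished terms $t_1,\dots,t_n$ to be empty, i.e. $n=0$. Under this degenerate case, the hypothesis $\vdash^\alpha \neg\mathsf{Prog}(\prec,X),t_1\notin X,\dots,t_n\notin X,\Delta$ collapses exactly to the hypothesis $\vdash^\alpha \neg\mathsf{Prog}(\prec,X),\Delta$ of the corollary, so no new inductive work is required.

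To extract the stated bound $2^\alpha$, I would adopt the convention that the maximum of the empty set of ordinal notations is $\mathbf{0}$, so that $\beta=\max\{|t_1|_\prec,\dots,|t_n|_\prec\}=\mathbf{0}$ and hence $\gamma=\beta+2^\alpha=2^\alpha$. Substituting these values into the conclusion of the lemma yields precisely $\mathsf{True}_{\Pi^1_1}\bigl(\forall X\,\bigvee\Delta[X\mapsto \prec_{2^\alpha}]\bigr)$, which is the corollary.

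The only point of care is bookkeeping inside $\mathsf{ACA}_0$: one needs the formalized statement of the lemma to be naturally parametrized over the finite list of distinguished terms and to admit the empty list, with the $\max$ operation returning $\mathbf{0}$ by convention. Both items are routine given the formalization fixed in the previous subsection, and inspecting the proof of the lemma confirms that every case (AxM, AxL, final inference yielding $\Delta$, final inference yielding $\neg\mathsf{Prog}$) goes through unchanged when the $t_i$ list is empty. I therefore expect no genuine obstacle; the corollary is essentially a notational repackaging of the lemma for the special case in which no auxiliary bounding terms $t_i$ are present, and it is this packaged form that will be convenient when we turn to the new proof of Lemma~\ref{rathjen}.
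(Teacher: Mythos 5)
Your proposal is correct and matches the paper exactly: the corollary is stated there without proof as ``the following special case of the previous lemma,'' which is precisely the $n=0$ instantiation with $\beta=\max\emptyset=\mathbf{0}$ and $\gamma=2^\alpha$ that you describe.
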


%Once again, we will only rely on the case of Corollary \ref{formal-bounding} when $\alpha<\varepsilon_0$.

\subsection{Formalizing $\Sigma^1_1$-bounding}

We are now ready to provide a diagonalization-free proof of Lemma \ref{rathjen}, restated here:
\begin{lemma}[Rathjen]
Suppose $H(x)$ is a $\Sigma^1_1$ formula such that:
$$\mathsf{ACA}_0 \vdash \forall x \big(H(x) \to x\in \mathfrak{W}_{Rec} \big).$$
Then for some $e\in Rec$:
$$\mathsf{ACA}_0 \vdash e \in \mathfrak{W}_{Rec} \wedge \neg H(e).$$
\end{lemma}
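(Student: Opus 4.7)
The plan is to combine Theorem \ref{sigma-fact} with Corollary \ref{formal-bounding}, following the spirit of the Beckmann--Pohlers proof of $\Sigma^1_1$-bounding, and then to exhibit a witness $e$ by hand using the proof-theoretic ordinal of $\mathsf{ACA}_0$. First I would write $H(x) = \exists Y\, \eta(x,Y)$ with $\eta$ arithmetic and unfold ``$x \in \mathfrak{W}_{Rec}$'' via the $\mathsf{TI}$-form of well-foundedness, so that the hypothesis becomes the prenex $\Pi^1_1$ sentence
$$\forall X\, \forall Y\, \forall x\, \Big(\neg \eta(x,Y) \,\vee\, \big(x \in Rec \,\wedge\, (\neg \mathsf{Prog}(\prec_x, X) \,\vee\, \forall y \in \mathsf{field}(\prec_x)\, y \in X)\big)\Big).$$
Applied externally, Theorem \ref{sigma-fact} supplies a specific $\alpha \boldsymbol{<} \varepsilon_0$ for which $\mathsf{ACA}_0$ proves $\vdash^\alpha$ of the matrix. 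Since $|\mathsf{ACA}_0|_{\mathsf{RE}} = \varepsilon_0$ and $2^\alpha \boldsymbol{<} \varepsilon_0$, I would also fix, via a standard recursive notation system, an index $e \in Rec$ such that $\mathsf{ACA}_0$ proves both $e \in \mathfrak{W}_{Rec}$ and $\mathsf{otyp}(\prec_e) \boldsymbol{>} 2^\alpha$.

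Reasoning inside $\mathsf{ACA}_0$, I would then apply Lemma \ref{wedge-inversion} in two stages: once to the outer $\forall x$ (which in the Tait calculus is an infinite conjunction) to specialize to $x = e$, and once to the inner conjunction to discard the $e \in Rec$ conjunct. Reading $\vee$ as sequent comma, this leaves the $X$-positive sequent
$$\vdash^\alpha \neg \eta(e, Y),\; \neg \mathsf{Prog}(\prec_e, X),\; \forall y \in \mathsf{field}(\prec_e)\, y \in X.$$
Because $\mathsf{ACA}_0 \vdash e \in \mathfrak{W}_{Rec}$, the relation $\prec_e$ is a provably arithmetic well-ordering, so Corollary \ref{formal-bounding} applies and yields
$$\forall Y\, \Big(\neg \eta(e, Y) \,\vee\, \forall y \in \mathsf{field}(\prec_e)\, |y|_{\prec_e} \boldsymbol{<} 2^\alpha\Big),$$
i.e., $H(e) \to \mathsf{otyp}(\prec_e) \boldsymbol{\leq} 2^\alpha$. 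Combined with $\mathsf{ACA}_0 \vdash \mathsf{otyp}(\prec_e) \boldsymbol{>} 2^\alpha$, this gives $\mathsf{ACA}_0 \vdash \neg H(e)$, while $\mathsf{ACA}_0 \vdash e \in \mathfrak{W}_{Rec}$ holds by construction.

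The main obstacle I anticipate is the bookkeeping around the two applications of $\wedge$-inversion: unfolding $\forall x$ into the numeral $e$ without disturbing the $X$-positivity of the surviving side formulas (in particular $\neg \eta(e, Y)$, which is $X$-free), and then verifying inside $\mathsf{ACA}_0$ that Corollary \ref{formal-bounding}'s hypothesis of arithmetic well-foundedness is met by the chosen $\prec_e$. A subsidiary task, routine but essential, is exhibiting a recursive notation $e$ whose $\mathsf{ACA}_0$-provable order-type exceeds $2^\alpha$; this is available because $2^\alpha \boldsymbol{<} \varepsilon_0 = |\mathsf{ACA}_0|_{\mathsf{RE}}$.
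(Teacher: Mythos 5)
Your proposal is correct and follows essentially the same route as the paper: prenex the hypothesis, invoke Theorem \ref{sigma-fact} to obtain an $\omega$-proof of height $\alpha\boldsymbol{<}\varepsilon_0$, apply the formalized boundedness result (Corollary \ref{formal-bounding}) to bound $\mathsf{otyp}(\prec_e)$ by $2^\alpha$ whenever $H(e)$ holds, and then take as witness a notation of order type just above $2^\alpha$, which is $\mathsf{ACA}_0$-provably in $\mathfrak{W}_{Rec}$ since $2^\alpha+1\boldsymbol{<}\varepsilon_0$. The only differences are presentational (you fix the witness $e$ first and specialize the derivation to it via $\wedge$-inversion, whereas the paper derives the bound uniformly for all $n$ with $H(n)$ and picks the witness at the end), and your explicit handling of the $\forall x$ quantifier and the $x\in Rec$ conjunct is if anything slightly more careful than the paper's.
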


\begin{proof}
Let $H(x)$ be a $\Sigma^1_1$ formula satisfying the hypothesis of the lemma. Then $H(x)$ is of the form $\exists Y\theta(x,Y)$ for some arithmetic formula $\theta$.For an $x\in \mathfrak{W}_{Rec}$, let $\prec^x$ be the well-ordering encoded by $x$.

We reason as follows:
\begin{flalign*}
\mathsf{ACA}_0 &\vdash \forall x \big(\exists Y\theta(x,Y) \to x\in \mathfrak{W}_{Rec} \big)\\
\mathsf{ACA}_0 &\vdash \forall x \big( \neg \exists Y\theta(x,Y) \vee \forall X\mathsf{TI}(\prec^x,X) \big)\\
\mathsf{ACA}_0 &\vdash \forall x \Big( \neg \exists Y\theta(x,Y) \vee \forall X\big( \neg\mathsf{Prog}(\prec^x,X) \vee \forall y \in\mathsf{field}(\prec^x)\; y\in X \big) \Big)\\
\mathsf{ACA}_0 &\vdash \forall X \forall Y \forall x \big( \neg \theta(x,Y) \vee \neg\mathsf{Prog}(\prec^x,X) \vee \forall y \in\mathsf{field}(\prec^x) \; y\in X \big)
\end{flalign*}

By Theorem \ref{sigma-fact}, there is an $\alpha\boldsymbol<\varepsilon_0$ such that the following is provable in $\mathsf{ACA}_0$:
\begin{equation}\label{star}
    \vdash^\alpha \neg\theta(x,Y), \neg \mathsf{Prog}(\prec^x,X), \forall y \in\mathsf{field}(\prec^x) \; y\in X .
\end{equation}

We now switch to \textbf{reasoning in} $\mathsf{ACA}_0$. Suppose that $H(n)$ holds. That is: 
\begin{equation}\label{existence}
\exists Y \theta(n,Y).
\end{equation}
Applying Corollary \ref{formal-bounding} to (\ref{star}) we infer that:
$$    \forall Y \big( \neg\theta(n,Y) \vee \forall y \in\mathsf{field}(\prec^x)\; y \in \prec^n_{2^\alpha} \big).$$
Which, by definition of $\prec^x_{2^\alpha}$, is just to say:
$$ \forall Y \big( \neg\theta(n,Y) \vee \forall y \in\mathsf{field}(\prec^n)\; y \in \{ k \mid |k|_{\prec^n} \boldsymbol< 2^\alpha \}\big).$$
Which is just to say that:
\begin{equation}\label{universal}
 \forall Y \big( \neg\theta(n,Y) \vee \forall y \in\mathsf{field}(\prec^n)\; |y|_{\prec^n} \boldsymbol< 2^\alpha \big).
 \end{equation}

 %\prec^x_{2^\alpha}.
    %|y|_{\prec^x_k} \boldsymbol\leq 2^\alpha \big)
Combining (\ref{existence}) and (\ref{universal}) we see that $\forall y\in\mathsf{field}(\prec^n) \; |y|_{\prec^n} \boldsymbol< 2^\alpha$.

This is just to say that $\mathsf{otyp}(\prec^n) \boldsymbol< 2^\alpha$. We infer that:
$$\mathsf{sup}\{\mathsf{otyp}(\prec^x) \mid  \mathsf{True}_{\Sigma^1_1}\big(H(x)\big) \}\boldsymbol< 2^\alpha \boldsymbol< 2^\alpha+1 \boldsymbol<\varepsilon_0.$$
Whence $2^\alpha +1\in\mathfrak{W}_{Rec}$ but $\neg H(2^\alpha +1)$.
\end{proof}

\section{Open problems}\label{open-problems}

We will conclude with open problem concerning the sharpness of these theorems. That is, can any of the hypotheses in the statement of these theorems be weakened? There are three hypotheses that can be tweaked in interesting ways. First, there is the question of relaxing the definability condition.

\begin{question}\label{definability-question}
Is there a $\Pi^1_1$-sound and \textbf{$\Pi^1_1$-definable} extension of $\Sigma^1_1\text{-}\mathsf{AC}_0$ that proves its own $\Pi^1_1$-soundness?
\end{question}

Note that a positive answer to this question would imply that Theorem \ref{main-new} is sharp, at least along the dimension of the descriptive complexity of $T$.

If we do not demand that the theory extends $\Sigma^1_1\text{-}\mathsf{AC}_0$, then we can get a positive answer of sorts by considering the set of $\Pi^1_1$ truths. This theory is $\Pi^1_1$-sound and definable by the $\Pi^1_1$ predicate $\mathsf{True}_{\Pi^1_1}(x)$, which says that $x$ encodes a sentence that is $\mathsf{ACA}_0$-provably equivalent to a $\Pi^1_1$ sentence. This theory proves its own $\Pi^1_1$-reflection \emph{statement}:
$$\forall \varphi\in\Pi^1_1 \big( \mathsf{True}_{\Pi^1_1}(\varphi) \to \mathsf{True}_{\Pi^1_1}(\varphi)\big),$$ which is a logical truth. However, note that this depends on treating $\Pi^1_1$-reflection as a single statement, which is arguably inappropriate for a theory that does not extend $\mathsf{ACA}_0$. If we treated $\Pi^1_1$-reflection as a schema:
$$\Bigg\{ \forall \vec{x} \Big( \mathsf{True}_{\Pi^1_1}\big(\varphi(\vec{x})\big) \to \varphi(\vec{x}) \Big) \mid \varphi(\vec{x}) \in \Pi^1_1 \Bigg\}$$
then the theory in question might not prove instances of this schema since $\mathsf{ACA}_0$ is required to transform arbitrary $\Pi^1_1$ statements into normal form.

If we define $T$ as the union of $\Sigma^1_1\text{-}\mathsf{AC}_0$ with the set of all $\Pi^1_1$ truths, then the $\Pi^1_1$-reflection statement:
$$\forall \varphi\in\Pi^1_1 \big( \mathsf{Pr}_{T}(\varphi) \to \mathsf{True}_{\Pi^1_1}(\varphi)\big),$$ is not a $\Pi^1_1$ statement, since the antecedent is $\Pi^1_1$, so it does not trivially follow from $T$. 

Second, there is the question of relaxing the soundness condition.

\begin{question}
Is there a \textbf{$\Sigma^1_1$-sound} and $\Sigma^1_1$-definable extension of $\Sigma^1_1\text{-}\mathsf{AC}_0$ that proves its own $\Pi^1_1$-soundness?
\end{question}

If we demand in addition that the theory proves Theorem \ref{main-new} and \emph{provably} extends $\Sigma^1_1\text{-}\mathsf{AC}_0$, then we get a strong negative answer. Suppose that:
\begin{enumerate}
    \item $T$ is definable by a $\Sigma^1_1$ formula $\tau$;
    \item $T$ proves that $\tau$ extends $\Sigma^1_1\text{-}\mathsf{AC}_0$;
    \item $T$ proves the $\Pi^1_1$-soundness of $\tau$.
\end{enumerate}
Then, since $T$ proves Theorem \ref{main-new}, $T$ proves that $\tau$ is not $\Pi^1_1$-sound. Hence, $T$ proves both that $\tau$ is and is not $\Pi^1_1$-sound, i.e., $T$ is inconsistent, whence $T$ is not $\Sigma^1_1$-sound.

Finally, there is the question of relaxing the condition that $T$ extend $\Sigma^1_1\text{-}\mathsf{AC}_0$.

\begin{question}\label{weaken-theory}
Is there a $\Pi^1_1$-sound and $\Sigma^1_1$-definable \textbf{extension of $\mathsf{ACA}_0$} that proves its own $\Pi^1_1$-soundness?
\end{question}

Regarding this question there are reasons to expect a negative answer. In a recent preprint \cite{aguilera2021pi}, Aguilera and Pakhomov have introduced $|T|_{\Pi^1_2}$, the $\Pi^1_2$ norm of $T$. $|T|_{\Pi^1_2}$ is a dilator associated with $T$ that is in some ways analogous to the proof-theoretic ordinal of $T$, except that it measures the $\Pi^1_2$ consequences of $T$. Aguilera and Pakhomov prove that $\Pi^1_2$-reflection for $T$ is equivalent to the statement ``$|T|_{\Pi^1_2}$ is a dilator'' (\cite{aguilera2021pi} Theorem 7); this is an analogue of Lemma \ref{key-lemma}. The important point is that this result is proved for $T$ extending $\mathsf{ACA}_0$. An appropriate reformulation of their proof of Theorem 7 may deliver a negative answer to Question \ref{weaken-theory}, which would strengthen Theorem \ref{main-new}.

%On the other hand, they prove only that $|T|_{\Pi^1_2}$ is well-defined for $\Pi^1_2$-sound theories (\cite{aguilera2021pi} Theorem 4), although they remark at the conclusion of the paper that this result can be generalized. Nevertheless, 

\bibliographystyle{plain}
\bibliography{bibliography}

\begin{thebibliography}{10}

\bibitem{aguilera2021pi}
Juan~P Aguilera and Fedor Pakhomov.
\newblock {The $\Pi^1_2$ Consequences of a Theory}.
\newblock {\em arXiv preprint arXiv:2109.11652}, 2021.

\bibitem{beckmann1998applications}
Arnold Beckmann and Wolfram Pohlers.
\newblock Applications of cut-free infinitary derivations to generalized
  recursion theory.
\newblock {\em Annals of Pure and Applied Logic}, 94(1-3):7--19, 1998.

\bibitem{beklemishev1997induction}
Lev~D Beklemishev.
\newblock Induction rules, reflection principles, and provably recursive
  functions.
\newblock {\em Annals of Pure and Applied Logic}, 85(3):193--242, 1997.

\bibitem{buchholz1991notation}
Wilfried Buchholz.
\newblock Notation systems for infinitary derivations.
\newblock {\em Archive for Mathematical Logic}, 30(5-6):277--296, 1991.

\bibitem{gentzen1969provability}
Gerhard Gentzen.
\newblock Provability and nonprovability of restricted transfinite induction in
  elementary number theory.
\newblock {\em The Collected Papers of Gerhard Gentzen}, pages 287--308, 1969.

\bibitem{kotlarski2004incompleteness}
Henryk Kotlarski.
\newblock The incompleteness theorems after 70 years.
\newblock {\em Annals of Pure and Applied Logic}, 126(1-3):125--138, 2004.

\bibitem{lutz2020incompleteness}
Patrick Lutz and James Walsh.
\newblock Incompleteness and jump hierarchies.
\newblock {\em Proceedings of the American Mathematical Society},
  148(11):4997--5006, 2020.

\bibitem{pakhomov2018reflection}
Fedor Pakhomov and James Walsh.
\newblock Reflection ranks and ordinal analysis.
\newblock {\em The Journal of Symbolic Logic}, pages 1--34, 2018.

\bibitem{pakhomov2021reflection}
Fedor Pakhomov and James Walsh.
\newblock Reflection ranks via infinitary derivations.
\newblock {\em arXiv preprint arXiv:2107.03521}, 2021.

\bibitem{pohlers1989proof}
Wolfram Pohlers.
\newblock Proof theory: An introduction.
\newblock {\em Lecture notes in mathematics}, 1407:VI--213, 1989.

\bibitem{pohlers1998subsystems}
Wolfram Pohlers.
\newblock Subsystems of set theory and second order number theory.
\newblock {\em Handbook of proof theory}, 137:209--335, 1998.

\bibitem{rathjen1991role}
Michael Rathjen.
\newblock The role of parameters in bar rule and bar induction.
\newblock {\em The Journal of Symbolic Logic}, 56(2):715--730, 1991.

\bibitem{rathjen1999realm}
Michael Rathjen.
\newblock The realm of ordinal analysis.
\newblock {\em London Mathematical Society Lecture Note Series}, pages
  219--280, 1999.

\bibitem{sacks2017higher}
Gerald~E Sacks.
\newblock {\em Higher Recursion Theory}, volume~2.
\newblock Cambridge University Press, 2017.

\bibitem{salehi2020diagonal}
Saeed Salehi.
\newblock {On the diagonal lemma of G{\"o}del and Carnap}.
\newblock {\em Bulletin of Symbolic Logic}, 26(1):80--88, 2020.

\bibitem{visser2019tarski}
Albert Visser.
\newblock {From Tarski to G{\"o}del—or how to derive the second
  incompleteness theorem from the undefinability of truth without
  self-reference}.
\newblock {\em Journal of Logic and Computation}, 29(5):595--604, 2019.

\bibitem{walsh2021hierarchy}
James Walsh.
\newblock On the hierarchy of natural theories.
\newblock {\em arXiv preprint arXiv:2106.05794}, 2021.

\end{thebibliography}

\end{document}